\newtheorem{theorem}{Theorem}[section]
\newtheorem{lemma}[theorem]{Lemma}
\newtheorem*{lemma*}{Lemma}
\newtheorem{prop}[theorem]{Proposition}
\theoremstyle{remark}
\newtheorem{rmk}[theorem]{Remark}
\newtheorem{exm*}{Example}
\theoremstyle{definition}
\newtheorem{defn}[theorem]{Definition}
\numberwithin{equation}
{section}
\title[Simplicity of $k$-graph $C^*$-algebras]
{Simplicity of $C^*$-algebras associated to row-finite locally convex higher-rank graphs}
\author{David Robertson}
\address{David Robertson, School of Mathematical and Physical Sciences\\
Building V\\
University of Newcastle\\
Callaghan, NSW, 2308\\
AUSTRALIA}
\email{D.Robertson@newcastle.edu.au}
\author{Aidan Sims}
\address{Aidan Sims, School of Mathematics and Applied Statistics\\
Austin Keane Building (15)\\
University of Wollongong\\
NSW, 2522\\
AUSTRALIA}
\email{asims@uow.edu.au}
\date{August 2, 2007}
\subjclass[2000]{46L05 (primary); 05C99 (secondary)}
\keywords{$k$-graph, $C^*$-algebra, graph algebra}
\newcommand{\Obj}{\operatorname{Obj}}
\newcommand{\Mor}{\operatorname{Mor}}
\newcommand{\id}{\operatorname{id}}
\newcommand{\dom}{\operatorname{dom}}
\newcommand{\cod}{\operatorname{cod}}
\newcommand{\NN}{\mathbb{N}}
\newcommand{\TT}{\mathbb{T}}
\newcommand{\Cc}{\mathcal{C}}
\newcommand{\Mm}{\mathcal{M}}
\newcommand{\tLambda}{\widetilde{\Lambda}}
\newcommand{\clsp}{\overline{\operatorname{span}}}
\newcommand{\MCE}{\operatorname{MCE}}
\begin{document}

\begin{abstract}
In previous work, the authors showed that the $C^*$-algebra
$C^*(\Lambda)$ of a row-finite higher-rank graph $\Lambda$ with no
sources is simple if and only if $\Lambda$ is both cofinal and
aperiodic. In this paper, we generalise this result to row-finite
higher-rank graphs which are locally convex (but may contain
sources). Our main tool is Farthing's ``removing sources"
construction which embeds a row-finite locally convex higher-rank
graph in a row-finite higher-rank graph with no sources in such a way
that the associated $C^*$-algebras are Morita equivalent.
\end{abstract}

\maketitle

\section{Introduction} \label{sec:introduction}

A directed graph is a quadruple $(E^0, E^1, r,s)$: $E^0$ is a
countable set of vertices; $E^1$ is a countable set of directed
edges; and $r,s$ are maps from $E^1$ to $E^0$ which encode the
directions of the edges: an edge $e$ points from the vertex $s(e)$ to
the vertex $r(e)$. In \cite{EW}~and~\cite{KPRR}, $C^*$-algebras were
associated to directed graphs so as to generalise the Cuntz-Krieger
algebras of \cite{CuKr}. These graph $C^*$-algebras have been studied
intensively over the last ten years; see \cite{CBMSbk} for a good
summary of the literature.

For technical reasons related to the groupoid models used to analyse
their $C^*$-algebras, the graphs considered in \cite{KPR, KPRR} were
assumed to be row-finite and to have no sources. This means that
$r^{-1}(v)$ is finite and nonempty for every vertex $v$. To eliminate
the ``no sources" hypothesis, Bates et al. \cite{BaPaRaSz} introduced
a construction known as \emph{adding tails}. Adding tails to a graph
$E$ with sources produces a graph $F$ with no sources so that the
associated $C^*$-algebras $C^*(E)$ and $C^*(F)$ are Morita
equivalent. Proving theorems about $C^*(E)$ then often becomes a
question of identifying hypotheses on $E$ which are equivalent to the
hypotheses of \cite{KPR} for $F$. In \cite{BaPaRaSz}, many important
theorems about $C^*$-algebras of row-finite graphs with no sources
were extended to $C^*$-algebras of arbitrary row-finite graphs using
this strategy.

\medskip

Higher-rank graphs (or $k$-graphs) $\Lambda$ and the associated
$C^*$-algebras $C^*(\Lambda)$ were introduced by Kumjian and Pask in
\cite{KuPa} as a common generalisation of graph $C^*$-algebras and
the higher-rank Cuntz-Krieger algebras developed by G. Robertson and
Steger in \cite{RoSt, RobSt}. As had \cite{KPR, KPRR} for graph
$C^*$-algebras, \cite{KuPa} studied $k$-graph $C^*$-algebras using a
groupoid model, and technical considerations associated to this model
made it necessary to restrict attention to $k$-graphs which were
row-finite and had no sources. Like graph $C^*$-algebras, $k$-graph
$C^*$-algebras have received substantial attention in recent years.
Unlike graph $C^*$-algebras, however, some fundamental
structure-theoretic questions regarding $k$-graph algebras have not
yet been answered, primarily due to the combinatorial complexities of
$k$-graphs themselves.

In \cite{RoSi}, the authors established that the \emph{cofinality}
and \emph{aperiodicity} conditions formulated in \cite{KuPa} as
sufficient conditions for simplicity of the $C^*$-algebra of a
row-finite higher-rank graph with no sources are in fact also
necessary. The resulting simplicity theorem is an exact analogue of
the original simplicity theorem for $C^*$-algebras of row-finite
graphs with no sources \cite{KPR}.

In this paper, we extend our previous simplicity result to a large
class of higher-rank graphs with sources. To do this we use
Farthing's \emph{removing sources} construction which produces from a
higher-rank graph $\Lambda$ a higher-rank graph $\overline{\Lambda}$
with no sources such that if $\Lambda$ is row-finite, then
$C^*(\Lambda)$ and $C^*(\overline{\Lambda})$ are Morita equivalent
\cite{Fa}. By establishing how infinite paths in $\overline{\Lambda}$
are related to boundary paths in $\Lambda$, we use Farthing's results
to generalise the simplicity result of \cite{RoSi} to the
\emph{locally convex} row-finite $k$-graphs with sources considered
in \cite{RaSiYe}. We should point out that the restriction to locally
convex $k$-graphs is not forced on us by Farthing's results, which
are applicable for arbitrary row-finite graphs. Indeed, local
convexity plays no role in Farthing's analysis, and initially we had
no expectation that it would impinge upon our analysis here. However,
it turns out, interestingly enough, that local convexity is needed to
ensure that the natural projection of $\overline{\Lambda}$ onto
$\Lambda$ extends to a projection from the space of infinite paths of
$\overline{\Lambda}$ to the space of boundary paths of $\Lambda$. We
use this projection to translate aperiodicity and cofinality
hypotheses on $\overline{\Lambda}$ to analogous conditions on
$\Lambda$.

\vskip0.5em plus 0em minus 0.1em

We begin the paper with preliminary notation and definitions in
Section~\ref{sec:preliminaries}. We also outline Farthing's removing
sources construction in the setting of row-finite locally convex
higher-rank graphs, and explore the relationship between boundary
paths in $\Lambda$ and infinite paths in $\overline{\Lambda}$. In
Section~\ref{sec:simplicity}, we turn to our main objective. Using
the results of the previous section, we formulate notions of
cofinality and aperiodicity for a locally convex row-finite
higher-rank graph $\Lambda$ which are equivalent to the corresponding
conditions of \cite{KuPa} for $\overline{\Lambda}$. Combining this
with Farthing's Morita equivalence between $C^*(\Lambda)$ and
$C^*(\overline{\Lambda})$ and the results of \cite{RoSi} yields the
desired simplicity theorem. In Section~\ref{sec:idealstructure}, we
apply the same methods to a number of other results of \cite{RoSi}
concerning the relationship between graph-theoretic properties of a
$k$-graph $\Lambda$ and the ideal-structure of $C^*(\Lambda)$.

\section{Preliminaries} \label{sec:preliminaries}

In this section we summarise the standard notation and conventions
for higher-rank graphs.

We regard $\NN=\{0,1,2,\dots\}$ as a semigroup under addition. We
write $\NN^k$ for the set of $k$-tuples $n=(n_1,n_2,\dots,n_k)$,
$n_i\in\NN$, which we regard as a semigroup under pointwise addition
with identity $0 = (0,0,\dots,0)$. We denote the canonical generators
of $\NN^k$ by $e_1,e_2,\dots,e_k$. Given $m,n\in\NN^k$, we say $m\leq
n$ if $m_i\leq n_i$ for $1\leq i\leq k$. We write $m\vee n$ for the
coordinate-wise maximum of $m$ and $n$, and $m\wedge n$ for the
coordinate-wise minimum. Unless otherwise indicated through
parentheses, $\vee$ and $\wedge$ always take precedence over addition
and subtraction, so for example $m\vee n-n=(m\vee n)-n$.

Note that $\vee$ and $\wedge$ distribute over addition and subtraction:
\begin{equation} \label{minimumdistributesoveraddition}
 a\wedge b + c = (a+c)\wedge(b+c) \ \mbox{ and } \ a\vee b+c = (a+c)\vee(b+c)
\end{equation}
for all $a,b,c\in\NN^k$, and
\begin{equation} \label{minimumdistributesoversubtraction}
 a\wedge b - c = (a-c)\wedge (b-c) \ \mbox{ and } \ a\vee b-c = (a-c)\vee(b-c)
\end{equation}
(as elements of $\mathbb{Z}^k$) for all $a,b,c\in\NN^k$.

\subsection{Higher-rank graphs} \label{subsec:higher-rankgraphs}
The notion of a higher-rank graph is best phrased in terms of
categories. For the basics of categories we refer the reader to
Chapter~1 of \cite{Ma}. We assume that all our categories are small
in the sense that $\Obj(\mathcal{C})$ and $\Mor(\mathcal{C})$ are
sets. Given a category $\mathcal{C}$, we identify $\Obj(\mathcal{C})$
with $\{\id_o:o\in\Obj(\mathcal{C})\}\subset\Mor(\mathcal{C})$, and
we write $c\in\mathcal{C}$ to mean $c\in\Mor(\mathcal{C})$. We write
composition in our categories as juxtaposition; that is, for
$c_1,c_2\in\mathcal{C}$ with $\dom(c_1)=\cod(c_2)$, $c_1c_2$ means
$c_1\circ c_2$. When convenient, we regard $\NN^k$ as a category with
just one object, and composition implemented by addition. We write
$\Cc\times_{\Obj(\Cc)} \Cc$ for the set
$\{(c_1,c_2):\dom(c_1)=\cod(c_2)\}$ of composable pairs in $\Cc$.

Fix $k\in\NN\setminus\{0\}$. A \emph{graph of rank $k$} or
\emph{$k$-graph} is a countable category $\Lambda$ equipped with a
functor $d:\Lambda\to\NN^k$ called the \emph{degree functor}, which
satisfies the \emph{factorisation property}: Given
$\lambda\in\Lambda$ with $d(\lambda)=m+n$, there are unique paths
$\mu,\nu\in\Lambda$ such that $d(\mu)=m$, $d(\nu)=n$ and
$\lambda=\mu\nu$. \emph{Higher-rank graph} is the generic term when
the rank $k$ is not specified.

Given a $k$-graph $\Lambda$ one can check using the factorisation
property that $d^{-1}(0) = \{\id_v : v \in \Obj(\Lambda)\}$. We
define $r,s : \Lambda \to d^{-1}(0)$ by $r(\lambda) :=
\id_{\cod(\lambda)}$ and $s(\lambda) := \id_{\dom(\lambda)}$, so
$\lambda = r(\lambda)\lambda = \lambda s(\lambda)$ for all $\lambda
\in \Lambda$. We think of $d^{-1}(0)$ as the vertices of $\Lambda$,
and refer to $r(\lambda)$ as the \emph{range} of $\lambda$ and to
$s(\lambda)$ as the \emph{source} of $\lambda$.

For $n\in\NN^k$ we write $\Lambda^n:=d^{-1}(n)$, so $\Lambda^0$
is the collection of vertices. For $v\in\Lambda^0$ and
$E\subset\Lambda$, we write $v E$ for $\{\lambda\in
E:r(\lambda)=v\}$, and $Ev:=\{\lambda\in E:s(\lambda)=v\}$.

For $\lambda\in\Lambda$ with $d(\lambda)=l$, and $0\leq m\leq
n\leq l$, the factorisation property ensures that there are
unique paths $\lambda'\in\Lambda^{m}$,
$\lambda''\in\Lambda^{n-m}$ and $\lambda'''\in\Lambda^{l-n}$
such that $\lambda=\lambda'\lambda''\lambda'''$. We write
$\lambda(0,m),\lambda(m,n)$ and $\lambda(n,l)$ for
$\lambda,\lambda'$ and $\lambda'''$ respectively.

We say a $k$-graph $\Lambda$ is \emph{row finite} if
$|v\Lambda^n|<\infty$ for all $v\in\Lambda^0$ and $n\in\NN^k$, and
that has \emph{no sources} if $v\Lambda^n\neq\emptyset$ for all
$v\in\Lambda^0$ and $n\in\NN^k$. We say $\Lambda$ is \emph{locally
convex} if for distinct $i,j \in \{1, \dots, k$, and paths
$\lambda\in\Lambda^{e_i}$ and $\mu\in\Lambda^{e_j}$ such that
$r(\lambda)=r(\mu)$, the sets $s(\lambda)\Lambda^{e_j}$ and
$s(\mu)\Lambda^{e_i}$ are non-empty.

For $n\in\NN^k$, we write
\[
\Lambda^{\leq n} := \{\lambda\in\Lambda:d(\lambda)\leq n
                    \text{ and } s(\lambda)\Lambda^{e_i}=\emptyset
                    \text{ whenever } d(\lambda)+e_i\leq n \}
\]
When $\Lambda$ has no sources, $\Lambda^{\leq n}=\Lambda^n$. For a
locally convex $k$-graph $\Lambda$,  $v\Lambda^{\leq n}\neq\emptyset$
for all $n\in\NN^k$ and $v\in\Lambda^0$, but $v\Lambda^n \not=
\emptyset$ for all $n,v$ if and only if $\Lambda$ has no sources.

\subsection{Boundary Paths and Infinite Paths}
Fix $k>0$ and $m\in(\NN\cup\{\infty\})^k$. Then $\Omega_{k,m}$
denotes the category with objects
$\Obj(\Omega_{k,m})=\{p\in\NN^k:p\leq m\}$, morphisms
$\Mor(\Omega_{k,m})=\{(p,q):p,q\in\NN^k,p\leq q\leq m\}$ and
$\dom(p,q)=q$, $\cod(p,q)=p$, $\id(p)=(p,p)$,
$(p,q)\circ(q,t)=(p,t)$. The formula $d(p,q)=q-p$ defines a functor
$d:\Omega_{k,m}\to \NN^k$, and the pair $(\Omega_{k,m},d)$ is a
row-finite locally convex $k$-graph.

Let $\Lambda$ be a row-finite locally convex $k$-graph. A
degree-preserving functor $x:\Omega_{k,m}\to\Lambda$ is a
\emph{boundary path of $\Lambda$} if
\[
(p\leq m \mbox{ and } p_i=m_i) \Longrightarrow
x(p)\Lambda^{e_i}=\emptyset \quad \mbox{ for all } p\in\NN^k, 1\leq i\leq k.
\]
We regard of $m$ as the degree of $x$ and denote it $d(x)$, and we
regard $x(0)$ as the range of $x$ and denote it $r(x)$. If
$m=(\infty,\dots,\infty)$ we call $x$ an \emph{infinite path}. We
write $\Lambda^{\leq\infty}$ for the collection of all boundary paths
and $\Lambda^{\infty}$ for the collection of all infinite paths of
$\Lambda$.

For $x\in\Lambda^{\leq\infty}$ and $n\in\NN^k$ with $n\leq d(x)$,
there is a boundary path $\sigma^n(x):\Omega_{k,d(x)-n}\to\Lambda$
defined by $\sigma^n(x)(p,q):=x(p+n,q+n)$ for all $p \le q \le
d(x)-n$. Given $x\in v\Lambda^{\leq\infty}$ and $\lambda\in\Lambda$
with $s(\lambda)=v$ there is a unique boundary path $\lambda
x:\Omega_{k,m+d(\lambda)}\to\Lambda$ satisfying $(\lambda
x)(0,d(\lambda)) = \lambda$ and $(\lambda x)(d(\lambda),d(\lambda)+p)
= x(0,p)$ for all $p\leq d(x)$. For $x\in\Lambda^{\leq\infty},
\lambda\in\Lambda r(x)$ and $p\leq d(x)$, we have $x(0,p)\sigma^p(x)
= x = \sigma^{d(\lambda)}(\lambda x)$. The set $\Lambda^{\le \infty}$
has similar properties to the $\Lambda^{\le n}$: if $\Lambda$ has no
sources, then $\Lambda^{\le \infty} = \Lambda^\infty$; and for
$\Lambda$ locally convex, $v\Lambda^{\leq\infty}$ is non-empty for
all $v\in\Lambda^0$, but $v\Lambda^\infty \not=\emptyset$ for all $v$
if and only if $\Lambda$ has no sources.

\subsection{$C^*$-algebras associated to $k$-graphs}
Let $\Lambda$ be a row-finite locally convex $k$-graph. Let $A$ be a
$C^*$-algebra, and let $\{t_\lambda:\lambda\in\Lambda\}$ be a
collection of partial isometries in $A$. We call $\{t_\lambda :
\lambda \in \Lambda\}$ a \emph{Cuntz-Krieger $\Lambda$-family} if
\begin{list}{}{\setlength{\itemindent}{.6em}\setlength{\labelwidth}{2em}}
\item[(i)] $\{t_v:v\in\Lambda^0\}$ is a collection of mutually
    orthogonal projections;
\item[(ii)] $t_\lambda t_\mu = t_{\lambda\mu}$ whenever
    $s(\lambda)=r(\mu)$;
\item[(iii)] $t_\mu^*t_\mu=t_{s(\mu)}$ for all $\mu\in\Lambda$; and
\item[(iv)] $t_v=\sum_{\lambda\in v\Lambda^{\leq n}}t_\lambda
    t_\lambda^*$ for all $v\in\Lambda^0, n\in\NN^k$.
\end{list}

There is a $C^*$-algebra $C^*(\Lambda)$ generated by a Cuntz-Krieger
$\Lambda$-family $\{s_\lambda : \lambda \in \Lambda\}$ which is
universal in the following sense: if
$\{t_\lambda:\lambda\in\Lambda\}$ is another Cuntz-Krieger
$\Lambda$-family in a $C^*$-algebra $A$, then there is a homomorphism
$\pi_t:C^*(\Lambda)\to A$ satisfying $\pi_t(s_\lambda)=t_\lambda$ for
all $\lambda\in\Lambda$.

\subsection{Removing sources from $k$-graphs}

The following construction is due to Farthing \cite{Fa}, and we refer
the reader there for details and proofs. We have modified the
formulation given in~\cite{Fa} slightly to streamline later proofs;
lengthy calculations show that the resulting $\tLambda$ is isomorphic
to Farthing's $\overline{\Lambda}$.

We present Farthing's construction only for row-finite locally convex
$k$-graphs because that is the generality in which we will be working
for the remainder of the paper. However, the construction makes sense
for arbitrary $k$-graphs as long as the appropriate notion of a
boundary path is used.

Fix a row-finite locally convex $k$-graph $\Lambda$. Define
\[
V_\Lambda := \{(x;m) : x \in \Lambda^{\le\infty}, m \in \NN^k\}
\]
and
\[
P_\Lambda := \{(x; (m,n)) : x \in \Lambda^{\le\infty}, m \le n \in \NN^k\}.
\]

The relation $\sim$ on $V_\Lambda$ defined by $(x;m) \sim
(y;n)$ if and only if
\begin{itemize}
\item[(V1)] $x(m \wedge d(x)) = y(n \wedge d(y))$; and
\item[(V2)] $m - m \wedge d(x) = n - n \wedge d(y)$
\end{itemize}
is an equivalence relation. We denote the equivalence class of
$(x;m)$ under $\sim$ by $[x;m]$. The relation $\approx$ on
$P_\Lambda$ defined by $(x;(m,n)) \approx (y; (p,q))$ if and only if
\begin{itemize}
\item[(P1)] $x(m \wedge d(x), n \wedge d(x)) = y(p \wedge
    d(y), q \wedge d(y))$;
\item[(P2)] $m - m \wedge d(x) = p - p \wedge d(y)$; and
\item[(P3)] $n - m = q - p$
\end{itemize}
is also an equivalence relation. We denote the equivalence class of
$(x;(m,n))$ under $\approx$ by $[x;(m,n)]$.

Theorem~3.24 of \cite{Fa} implies that there is a row-finite
$k$-graph $\tLambda$ with objects $V_\Lambda/\sim$, morphisms
$P_\Lambda/\approx$, and structure-maps specified by the following
formulae:
\begin{align*}
\widetilde{r}([x;(m,n)]) &:= [x;m], \\
\widetilde{s}([x;(m,n)]) &:= [x;n], \\
\widetilde{\id}([x;m]) &:= [x;(m,m)], \\
[x;(m,n)] \widetilde{\circ} [y;(p,q)] &:= [x(0, n \wedge
d(x))\sigma^{p \wedge d(y)}(y); (m, n+q-p)], \quad\text{and}\\
\widetilde{d}([x;(m,n)]) &:= n-m.
\end{align*}
Theorem~3.24 further states that $(\widetilde{\Lambda},
\widetilde{d})$ is a row-finite $k$-graph with no sources.

For $\lambda \in \Lambda$ and boundary paths $x,y \in
s(\lambda)\Lambda^{\le \infty}$, the elements $(\lambda x; (0,
d(\lambda)))$ and $(\lambda y; (0, d(\lambda)))$ of $P_\Lambda$ are
equivalent under ${\sim}$. Hence there is a map $\iota : \Lambda \to
\widetilde{\Lambda}$ satisfying $\iota(\lambda) := [\lambda x; (0,
d(\lambda))]$ for any $x \in s(\lambda) \Lambda^{\le \infty}$. Indeed
$\iota$ is an injective $k$-graph morphism, and hence an isomorphism
of $\Lambda$ onto $\iota(\Lambda) \subset \widetilde{\Lambda}$.

Theorem 3.29 of \cite{Fa} shows that $\sum_{v \in
\Lambda^0} s_{\iota(v)}$ converges to a full projection $P$ in
the multiplier algebra of $C^*(\widetilde{\Lambda})$, and that
$P C^*(\widetilde{\Lambda}) P = C^*(\{s_{\iota(\lambda)} :
\lambda \in \Lambda\}) \cong C^*(\Lambda)$. In particular,
$C^*(\widetilde{\Lambda})$ is Morita equivalent to
$C^*(\Lambda)$.

In light of the preceding two paragraphs, we can --- and do ---
regard $\Lambda$ as a subset of $\widetilde{\Lambda}$, dropping the
inclusion map $\iota$, and regard $C^*(\Lambda)$ as a
$C^*$-subalgebra of $C^*(\widetilde{\Lambda})$ also.

\subsection{From boundary paths to infinite paths and back}

Our aim in this paper is to combine Farthing's results with the
results of \cite{RoSi} to characterise simplicity of
$C^*(\Lambda)$ in terms of the structure of
$\Lambda^{\leq\infty}$. Applied directly, the results of
\cite{RoSi} characterise the simplicity of $C^*(\Lambda)$ in
terms of the structure of $\tLambda^\infty$. Hence we begin by
establishing how elements of $\Lambda^{\leq\infty}$ correspond
to elements of $\tLambda^\infty$.

For an element $x\in\Lambda^{\leq\infty}$ and $n\in\NN^k$,
define
\[
 [x;(n,\infty)]:\Omega_k\to\tLambda \mbox{ by } [x;(n,\infty)](p,q):=[x;(n+p,n+q)].
\]
Our notation was chosen to suggest the relationship between
$[x;(n,\infty)]$ and $[x;(n,p)]$, $p \ge n$; however, the reader
should note that $\approx$ is not defined on boundary paths, and in
particular $[x;(n,\infty)]$ is not itself an equivalence class under
$\approx$.

Define $\pi:\tLambda\to\Lambda$ by $\pi([x;(m,n)]) = [x;(m\wedge
d(x),n\wedge d(x)]$ for all $x\in\Lambda^{\leq\infty}$ and $m\leq n
\in\NN^k$. Note that $\pi([x;(m,n)]) \in \iota(\Lambda)\subseteq
\tLambda$, so we regard $\pi$ as a $k$-graph morphism from $\tLambda$
to $\Lambda$, and identify $\pi([x;(m,n)])$ with $x(m\wedge
d(x),n\wedge d(x))$. That $\pi$ is well defined follows immediately
from (P1). It is straightforward to check that $\pi$ is a functor, is
surjective onto $\Lambda$ and is a projection in the sense that $\pi
\circ \pi = \pi$.

\begin{lemma} \label{lemma:pathnotation}
Let $\Lambda$ be a locally convex row-finite $k$-graph. For
$x\in\Lambda^{\leq\infty}$ and $m\leq n\in\NN^k$ we have
\[
 [x;(m,n)]=[\sigma^{m\wedge d(x)}(x);(m-m\wedge d(x),n-m\wedge d(x))].
\]
Also $(m-m\wedge d(x))\wedge d(\sigma^{m\wedge d(x)}(x))=0$. In
particular, for each $\lambda \in \tLambda$ there exist $y \in
\Lambda^{\leq\infty}$ and $p \in \NN^k$ such that $\lambda =
[y; (p, p+d(\lambda))]$ and $p \wedge d(y) = 0$.
\end{lemma}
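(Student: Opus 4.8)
The plan is to prove the displayed identity by checking directly that the two representatives are $\approx$-equivalent, to extract the second assertion as the key computation inside that verification, and then to obtain the ``in particular'' clause by applying the identity to an arbitrary representative of $\lambda$.

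First I would fix the abbreviation $a := m \wedge d(x)$ and set $y := \sigma^a(x)$. Since $a \le d(x)$, the shift $\sigma^a(x)$ is defined and is a boundary path of degree $d(y) = d(x) - a$. Because $a \le m \le n$, both $m-a$ and $n-a$ lie in $\NN^k$ with $m-a \le n-a$, so $(y;(m-a,n-a)) \in P_\Lambda$ and the right-hand side makes sense. The heart of the whole lemma is the single computation
\[
(m-a)\wedge d(y) = (m-a)\wedge(d(x)-a) = (m\wedge d(x)) - a = a - a = 0,
\]
whose middle step is the subtraction-distributivity law \eqref{minimumdistributesoversubtraction} in the coordinates where $d(x)$ is finite, and is immediate in the coordinates where $d(x)$ is infinite (there $a$ agrees with $m$). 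This is precisely the second assertion $(m - m\wedge d(x))\wedge d(\sigma^{m\wedge d(x)}(x)) = 0$, so that statement is discharged at the same time.

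With this identity in hand I would verify (P1)--(P3) for $(x;(m,n)) \approx (y;(m-a,n-a))$. Conditions (P2) and (P3) are immediate: $(m-a) - (m-a)\wedge d(y) = (m-a) - 0 = m - m\wedge d(x)$, and $(n-a)-(m-a) = n-m$. For (P1) I would again use $(m-a)\wedge d(y) = 0$ and the companion identity $(n-a)\wedge d(y) = (n\wedge d(x)) - a$ (same distributivity law), together with the defining formula $\sigma^a(x)(p',q') = x(p'+a,q'+a)$, to compute $y\big(0,(n\wedge d(x))-a\big) = x(a, n\wedge d(x)) = x(m\wedge d(x), n\wedge d(x))$. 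This proves the displayed identity.

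Finally, for the ``in particular'' clause I would take an arbitrary $\lambda \in \tLambda$, write it as $\lambda = [x;(m,n)]$ for some $x \in \Lambda^{\le\infty}$ and $m \le n$, and note $d(\lambda) = n - m$. Applying the identity just proved with $y := \sigma^{m\wedge d(x)}(x)$ and $p := m - m\wedge d(x)$ gives $\lambda = [y;(p, n - m\wedge d(x))]$; since $n - m\wedge d(x) = (m - m\wedge d(x)) + (n-m) = p + d(\lambda)$ and $p \wedge d(y) = 0$ by the second assertion, this is exactly $\lambda = [y;(p,p+d(\lambda))]$ with $p \wedge d(y) = 0$. I do not expect a genuine obstacle: the lemma reduces entirely to the bookkeeping identity $(m-a)\wedge(d(x)-a) = (m\wedge d(x))-a$ and substitution into the definitions of $\approx$ and $\sigma$. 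The only point requiring care is keeping the truncations ``$\wedge d(x)$'' straight --- in particular remembering that $\sigma^a(x)$ has degree $d(x)-a$, and consistently using the subtraction law \eqref{minimumdistributesoversubtraction} rather than its addition counterpart \eqref{minimumdistributesoveraddition}.
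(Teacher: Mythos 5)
Your proposal is correct and follows essentially the same route as the paper: establish the key identity $(m-m\wedge d(x))\wedge d(\sigma^{m\wedge d(x)}(x))=0$ first, then verify (P1)--(P3) directly for the two representatives, and obtain the final clause by applying the identity to an arbitrary representative $[x;(m,n)]$ with $y=\sigma^{m\wedge d(x)}(x)$, $p=m-m\wedge d(x)$. The only cosmetic difference is that you derive the zero identity from the distributivity law \eqref{minimumdistributesoversubtraction} (treating infinite coordinates separately, which is the right care to take), whereas the paper argues coordinate-wise; these are the same computation.
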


\begin{proof}
We begin by verifying $(m-m\wedge d(x))\wedge d(\sigma^{m\wedge
d(x)}(x))=0$. For each $i\in\{1,\dots,k\}$, either $(m\wedge
d(x))_i=m_i$ or $(m\wedge d(x))_i=d(x)_i$. Hence, either
$(m-m\wedge d(x))_i=0$ or $(d(x)-m\wedge d(x))_i=0$ or both.
Since $d(\sigma^{m\wedge d(x)}(x))=d(x)-m\wedge d(x)$, we
therefore obtain
\begin{equation} \label{eqn:minequalzero}
 (m-m\wedge d(x))\wedge d(\sigma^{m\wedge d(x)}(x))=0
\end{equation}
as required.

To see that $[x;(m,n)]=[\sigma^{m\wedge d(x)}(x);(m-m\wedge
d(x),n-m\wedge d(x))]$, we must check that $(x;(m,n)) \approx
(\sigma^{m\wedge d(x)}(x);(m-m\wedge d(x),n-m\wedge d(x)))$.

For (P1), we note that
\begin{equation} \label{star}
  x(m\wedge d(x),n\wedge d(x))= \sigma^{m\wedge d(x)}(x)(0,n\wedge d(x)-m\wedge d(x))
\end{equation}

By~\eqref{eqn:minequalzero} we may replace the $0$ in the right hand
side of~\eqref{star} with $(m-m\wedge d(x))\wedge d(\sigma^{m\wedge
d(x)}(x))$, giving
\[ \label{star2}
x(m\wedge d(x),n\wedge d(x))
 = \sigma^{m\wedge d(x)}(x)((m-m\wedge d(x))\wedge d(\sigma^{m\wedge d(x)}(x)),
                            n\wedge d(x)-m\wedge d(x)))
\]
Taking $a=n,b=d(x)$ and $c=m\wedge d(x)$
in~\eqref{minimumdistributesoversubtraction} we have
\begin{eqnarray*}
 n\wedge d(x)-m\wedge d(x) &=& (n-m\wedge d(x))\wedge (d(x)-m\wedge d(x)) \\
 &=& (n-m\wedge d(x))\wedge d(\sigma^{m\wedge d(x)}(x)).
\end{eqnarray*}
Hence~\eqref{star2} implies that
\[\begin{split}
 x(m\wedge {}&d(x),n\wedge d(x)) \\
  &= \sigma^{m\wedge d(x)}(x)((m-m\wedge d(x))\wedge d(\sigma^{m\wedge d(x)}(x)),
                              (n-m\wedge d(x))\wedge d(\sigma^{m\wedge d(x)}(x))
\end{split}\]
establishing (P1). For (P2), we calculate:
\begin{align*}
 m-m\wedge d(x) &= (m-m\wedge d(x))-0 \\
 &= (m-m\wedge d(x))-(m-m\wedge d(x))\wedge d(\sigma^{m\wedge d(x)}(x)).
\end{align*}
For (P3), just note that $n-m = (n-m\wedge d(x))-(m-m\wedge d(x))$.

For the last statement of the lemma, write $\lambda=[x;(m,n)]$ for
some $x\in\Lambda^{\leq\infty}$ and $m,n\in\NN^k$, and take
$y=\sigma^{m\wedge d(x)}(x)$ and $p=m-m\wedge d(x)$.
\end{proof}

\begin{lemma} \label{lemmalocalperiodicity2}
Let $\Lambda$ be a row-finite locally convex $k$-graph and let
$v\in\Lambda^0$. Suppose $x\in v\Lambda^{\leq\infty}$ and
$p\in\NN^k$ satisfy $p\wedge d(x)=0$. Then for any other $z\in
v\Lambda^{\leq\infty}$ we have $p\wedge d(z)=0$ and
$[x;(0,p)]=[z;(0,p)]$.
\end{lemma}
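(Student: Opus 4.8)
The plan is to reduce both assertions to a single observation about boundary paths: for any $w \in v\Lambda^{\leq\infty}$, the $i$-th coordinate $d(w)_i$ vanishes if and only if $v\Lambda^{e_i} = \emptyset$. Consequently the set $\{i : d(w)_i = 0\}$ depends only on the range vertex $v$ and not on the particular boundary path $w$, and this is precisely what forces both $p \wedge d(z) = 0$ and the equality $[x;(0,p)] = [z;(0,p)]$.

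First I would prove the observation. For the forward implication, suppose $d(w)_i = 0$. Applying the boundary-path condition to $w$ with $p = 0$ (so that $0_i = d(w)_i$) yields $w(0)\Lambda^{e_i} = r(w)\Lambda^{e_i} = v\Lambda^{e_i} = \emptyset$. For the converse, suppose $d(w)_i > 0$; then $e_i \leq d(w)$, so the factorisation property provides the morphism $w(0,e_i) \in v\Lambda^{e_i}$, whence $v\Lambda^{e_i} \neq \emptyset$. This establishes the stated equivalence.

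Given the observation, the first conclusion is immediate. The hypothesis $p \wedge d(x) = 0$ says exactly that for each $i$ either $p_i = 0$ or $d(x)_i = 0$, and by the observation $d(x)_i = 0$ is equivalent to the condition $v\Lambda^{e_i} = \emptyset$, which refers to $v$ alone. Since this condition is unchanged on replacing $x$ by $z$, we again obtain $p_i = 0$ or $d(z)_i = 0$ for each $i$; that is, $p \wedge d(z) = 0$.

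For the second conclusion I would verify $(x;(0,p)) \approx (z;(0,p))$ directly against (P1)--(P3). Because $0 \wedge d(x) = 0$ and $p \wedge d(x) = 0$ (and likewise for $z$), the term $x(0 \wedge d(x), p \wedge d(x))$ appearing in (P1) collapses to $x(0,0) = r(x) = v$, and the $z$-term collapses to $r(z) = v$; as $r(x) = r(z) = v$, (P1) holds. Conditions (P2) and (P3) reduce to the trivial identities $0 - 0 = 0 - 0$ and $p - 0 = p - 0$. Hence the two representatives are $\approx$-equivalent and define the same morphism of $\tLambda$. The only genuine content lies in the opening observation, so I expect no real obstacle beyond handling its two implications with the appropriate tools, namely the boundary-path condition in one direction and the factorisation property in the other.
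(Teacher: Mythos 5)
Your proposal is correct and follows essentially the same route as the paper's proof: both use the boundary-path condition to pass from $d(x)_i=0$ to $v\Lambda^{e_i}=\emptyset$ and back to $d(z)_i=0$, and then check (P1)--(P3) directly. The only difference is cosmetic --- you package the key step as an explicit ``if and only if'' observation and write out the (P1)--(P3) verification that the paper leaves to the reader.
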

\begin{proof}
Fix $z\in v\Lambda^{\leq\infty}$. We must show that $p_i\neq 0$
implies $d(z)_i=0$. Suppose $i\in\{1,\dots,k\}$ satisfies $p_i\neq
0$. Then $p\wedge d(x)_i=0$ implies that $d(x)_i=0$. Since $x$ is a
boundary path it follows that $v\Lambda^{e_i}=\emptyset$, so
$d(z)_i=0$ also. One now easily verifies (P1)--(P3) directly to see
that $[x;(0,p)]=[z;(0,p)]$.
\end{proof}

Before stating the next proposition, we need some notation. Fix
$\mu,\nu\in\Lambda$. We say that $\lambda\in\Lambda$ is a
\emph{common extension} of $\mu$ and $\nu$ if $\lambda = \mu\mu'=
\nu\nu'$ for some $\mu',\nu' \in \Lambda$. If $\lambda$ is a common
extension of $\mu$ and $\nu$, then in particular, $d(\lambda) \ge
d(\mu) \vee d(\nu)$. We say that a common extension $\lambda$ of
$\mu$ and $\nu$ is a \emph{minimal} common extension if
$d(\lambda)=d(\mu)\vee d(\nu)$. We write $\MCE(\mu,\nu)$ for the
collection of all minimal common extensions of $\mu$ and $\nu$.

Now fix $v \in \Lambda^0$ and let $E\subset v\Lambda$. We say that
$E$ is \emph{exhaustive} if for each $\lambda\in v\Lambda$ there
exists $\mu\in E$ such that $\MCE(\lambda,\mu)\neq \emptyset$. If $E$
is also finite, we call $E$ \emph{finite exhaustive}.

\begin{prop} \label{prop:infinitepaths}
Let $(\Lambda,d)$ be a row-finite, locally convex $k$-graph. Then for
each infinite path $y\in\tLambda^\infty$, there exist a unique $p_y
\in \NN^k$ and $\pi(y) \in\Lambda^{\leq\infty}$ such that $p_y \wedge
d(\pi(y)) = 0$ and $y = [\pi(y); (p_y, \infty)]$. We then have
$\pi(y(m,n)) = \pi(y)(m \wedge d(\pi(y)), n \wedge d(\pi(y)))$ for
all $m,n \in \NN^k$.
\end{prop}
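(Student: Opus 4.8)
The plan is to realise $\pi(y)$ as a directed limit of the $\pi$-images of the initial segments of $y$, to read $p_y$ off the range vertex $y(0)$, and then to check the two defining properties and the displayed formula. First I would set $\mu_n:=\pi(y(0,n))\in\Lambda$. Because $\pi$ is a functor and $y(0,n')=y(0,n)\,y(n,n')$ for $n\le n'$, we get $\mu_{n'}=\mu_n\,\pi(y(n,n'))$, so $\mu_n=\mu_{n'}(0,d(\mu_n))$ and $d(\mu_n)\le d(\mu_{n'})$; thus the $\mu_n$ are coherent with non-decreasing degrees. Putting $M:=\bigvee_n d(\mu_n)\in(\NN\cup\{\infty\})^k$, the rule $(a,b)\mapsto\mu_n(a,b)$ (for any $n$ with $b\le d(\mu_n)$) defines a degree-preserving functor $\pi(y)\colon\Omega_{k,M}\to\Lambda$ with $d(\pi(y))=M$. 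To fix $p_y$, I would write $y(0)=[x;m]$ in the normalised form of Lemma~\ref{lemma:pathnotation}, so $m\wedge d(x)=0$; by (V1)--(V2) this $m$ depends only on the vertex $y(0)$, and I set $p_y:=m$. Since $r(\pi(y))=r(x)$, applying Lemma~\ref{lemmalocalperiodicity2} at $r(x)$ to $\pi(y),x\in r(x)\Lambda^{\le\infty}$ yields both $p_y\wedge d(\pi(y))=0$ and $y(0)=[\pi(y);p_y]$.

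Next I would establish $y=[\pi(y);(p_y,\infty)]$; since an infinite path is determined by its initial segments, it suffices to show $y(0,n)=[\pi(y);(p_y,p_y+n)]$ for all $n$. Lemma~\ref{lemma:pathnotation} writes $y(0,n)=[x_n;(p_n,p_n+n)]$ with $p_n\wedge d(x_n)=0$, and $\ra(y(0,n))=y(0)$ forces $p_n=p_y$. I would then verify (P1)--(P3) for $(x_n;(p_y,p_y+n))\approx(\pi(y);(p_y,p_y+n))$: (P2) and (P3) are immediate from $p_y\wedge d(x_n)=p_y\wedge M=0$, and (P1) comes down to the degree identity $(p_y+n)\wedge M=(p_y+n)\wedge d(x_n)$ (after which both sides of (P1) equal $\mu_n$). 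Coordinatewise: where $d(x_n)_i\ge(p_y+n)_i$ one has $d(\mu_n)_i=(p_y+n)_i\le M_i$; where $d(x_n)_i<(p_y+n)_i$ the boundary condition on $x_n$ gives $s(\mu_n)\Lambda^{e_i}=\emptyset$, and since (by the factorisation property) a path leaving a vertex with no $e_i$-edge never acquires positive $i$-degree, no later $\mu_{n'}$ can grow in direction $i$, whence $M_i=d(\mu_n)_i=d(x_n)_i$.

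The hard part will be checking that $\pi(y)$ is a boundary path, and this is where local convexity is indispensable. Fix $i$ and $p\le M$ with $p_i=M_i<\infty$; I must show $\pi(y)(p)\Lambda^{e_i}=\emptyset$. For $n$ large in the directed order, $d(\mu_n)_i=M_i=p_i$ and $d(\mu_n)\ge p$; moreover $\mu_n\in\Lambda^{\le(p_y+n)}$ (as $\mu_n$ is the degree-$(p_y+n)\wedge d(x_n)$ initial segment of the boundary path $x_n$), so from $d(\mu_n)+e_i\le p_y+n$ we get $s(\mu_n)\Lambda^{e_i}=\emptyset$. Now $\pi(y)(p)$ is joined to $s(\mu_n)=\pi(y)(d(\mu_n))$ by the path $\mu_n(p,d(\mu_n))$, whose $i$-degree is $d(\mu_n)_i-p_i=0$. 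If $\pi(y)(p)$ had an $e_i$-edge, then local convexity, applied step by step along this edge-path (at each vertex an $e_i$-edge and an $e_j$-edge with $j\ne i$ share a range, yielding an $e_i$-edge one vertex further on), would propagate an $e_i$-edge all the way to $s(\mu_n)$, contradicting $s(\mu_n)\Lambda^{e_i}=\emptyset$. Hence $\pi(y)(p)\Lambda^{e_i}=\emptyset$ and $\pi(y)\in\Lambda^{\le\infty}$.

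Finally, for uniqueness: any pair $(p',x')$ with $p'\wedge d(x')=0$ and $y=[x';(p',\infty)]$ has $y(0)=[x';p']$, so $p'=p_y$ by uniqueness of the normalised offset, and then $x'(0,(p_y+n)\wedge d(x'))=\mu_n$ for every $n$ forces $x'=\pi(y)$ and $d(x')=M$. For the displayed formula I would apply the original $\pi$ to $y(m,n)=[\pi(y);(p_y+m,p_y+n)]$, giving $\pi(y(m,n))=\pi(y)((p_y+m)\wedge M,(p_y+n)\wedge M)$, and then use $p_y\wedge M=0$ --- which makes $(p_y)_j=0$ whenever $M_j>0$, so that $(p_y+m)\wedge M=m\wedge M$ coordinatewise --- to rewrite this as $\pi(y)(m\wedge d(\pi(y)),n\wedge d(\pi(y)))$.
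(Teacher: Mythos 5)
Your proposal is correct, and its skeleton matches the paper's proof: both construct $\pi(y)$ as a direct limit of the paths $\mu_n=\pi(y(0,n))=x_n(0,n\wedge d(x_n))$, identify $p_y$ from the normalised representative of the vertex $y(0)$ (Lemma~\ref{lemma:pathnotation} plus (V2)), and then check (P1)--(P3), uniqueness and the shift formula by the same coordinatewise manipulations; your use of the functoriality of $\pi$ to obtain coherence of the $\mu_n$ is a slightly tidier route to what the paper records as~\eqref{eqn:equalpaths}. The genuine divergence is the boundary-path verification. You derive it from two auxiliary facts --- that $\mu_n\in\Lambda^{\le(p_y+n)}$, and a step-by-step local-convexity propagation lemma (an $e_i$-edge at $\pi(y)(p)$ would propagate along the zero-$i$-degree path $\mu_n(p,d(\mu_n))$ to an $e_i$-edge at $s(\mu_n)$, contradicting $s(\mu_n)\Lambda^{e_i}=\emptyset$) --- both true, but each requiring its own short inductive proof. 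The paper instead shows $q\wedge d(x_q)=q$, observes that the definition of $m$ forces $d(x_{q+e_i})_i=q_i$, and then invokes the boundary condition of the representative $x_{q+e_i}$ at the point $q$ itself, so that $x(q)\Lambda^{e_i}=x_{q+e_i}(q)\Lambda^{e_i}=\emptyset$ falls out with no propagation and no explicit appeal to local convexity in that step. So your remark that local convexity is ``indispensable'' precisely there is not borne out by the paper's argument: local convexity sits in the standing hypotheses and the supporting lemmas rather than in this verification, and what your version buys is an explicit display of one place where it suffices, at the cost of the extra lemmas. One presentational caveat: you apply Lemma~\ref{lemmalocalperiodicity2} to $\pi(y)\in r(x)\Lambda^{\le\infty}$ before you have shown that $\pi(y)$ is a boundary path; the dependency is not circular (your boundary-path argument uses only $p_n=p_y$ and the normalised forms of the $y(0,n)$, not the conclusions of that lemma), but in a polished write-up the boundary-path verification must precede that application.
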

\begin{proof}
We first argue existence. Fix $y\in\tLambda^\infty$. For each
$n\in\NN^k$ there exists an $x_n\in\Lambda^{\leq\infty}$ and
$p(n)\in\NN^k$ such that $[x_n;(p(n),p(n)+n)]=y(0,n)$. By
Lemma~\ref{lemma:pathnotation} we may assume that $p(n)\wedge
d(x_n)=0$ for all $n\in\NN^k$. This forces $p(n)=p(0)$ and
$r(x_n)=r(x_0)$ for all $n\in\NN^k$. We will henceforth just
write $p$ for $p(0)$. For $a,b \in \NN^k$ with $a \le b$, we
have
\[
 [x_a;(p,p+a)]=y(0,a)=[x_b;(p,p+a)].
\]
Since $p \wedge d(x_a) = 0 = p \wedge d(x_b)$, we therefore
have
\begin{equation} \label{eqn:equalpaths}
 x_a(0,a \wedge d(x_a))=x_b(0,a\wedge d(x_a)) \quad\text{for $a \le b \in \NN^k$.}
\end{equation}
Define $m \in (\NN \cup \{\infty\})^k$ by
\[\textstyle
 m:=\bigvee_{n\in\NN^k} n \wedge d(x_n).
\]
Fix $q \in \NN^k$ with $q \le m$. Suppose that $a,b \in \NN^k$
satisfy $a \wedge d(x_a), b \wedge d(x_b) \ge n$. Since $q \le
a \wedge d(x_a)$, we may use Equation~\eqref{eqn:equalpaths} to
calculate:
\[
x_a(0,q)
= (x_a(0, a \wedge d(x_a)))(0,q)
= (x_{a \vee b}(0, a \wedge d(x_a)))(0,q)
= x_{a \vee b}(0,q).
\]
Similarly, $x_b(0,q) = x_{a \vee b}(0,q)$, and in particular,
$x_a(0,q) = x_b(0,q)$.
%
%Fix $n \in \NN^k$ with $n \le m$. If $a,b \in \NN^k$ satisfy $a
%\wedge d(x_a), b \wedge d(x_b) \ge n$, then two applications
%of~\eqref{eqn:equalpaths} give
%\[\begin{split}
%x_a(0,n) = (x_a(0, a \wedge d(x_a)))(0,n)
%&= (x_{a \vee b}(0, (a \vee b) \wedge d(x_a)))(0,n) \\
%&= (x_b(0, b \wedge d(x_a)))(0,n) = x_b(0,n)
%\end{split}\]
%\begin{eqnarray*}
% x_a(0,n) &=& [x_a;(0,\infty)](0,n) \\
% &=& \big([x_a;(0,p)][x_a;(p,p+a)]\big)(0,n) \\
% &=& \big([x_{a\vee b};(0,p)][x_{a\vee b};(p,p+a)]\big)(0,n)
%        \quad\text{by \eqref{eqn:equalpaths} and Lemma \ref{lemmalocalperiodicity2}} \\
% &=& [x_{a\vee b};(0,\infty)](0,n) \\
% &=& x_{a\vee b}(0,n).
%\end{eqnarray*}
%A symmetric argument shows that $x_b(0,n)=x_{a\vee b}(0,n)$, and in
%particular, $x_a(0,n) = x_b(0,n)$.
Hence there is a unique graph morphism $x : \Omega_{k,m} \to
\Lambda$ such that for $q,q' \in \NN^k$ with $q \le q' \le m$,
\[
x(q,q') := x_a(q,q')\text{ for any $a\in\NN^k$ such that
$a \wedge d(x_a) \geq q'$.}
\]

We claim that $x\in\Lambda^{\leq\infty}$. Suppose $q\in\NN^k$ and $1
\le i \le k$ satisfy $q\leq m$ and $q_i = m_i$. We must show that
$x(q)\Lambda^{e_i} = \emptyset$. We first claim that $q \wedge d(x_q)
= q$. To see this, we argue by contradiction. Suppose that $d(x_q)_j
< q_j$ for some $j$. Since $x_q$ is a boundary path, $x_q(q \wedge
d(x_q))\Lambda^{e_j} = \emptyset$. By~\eqref{eqn:equalpaths},
$x_{q'}(q \wedge d(x_q)) = x_q(q \wedge d(x_q))$ for all $q' \ge q$,
and this forces $d(x_{q'})_j = d(x_q)_j < q_j$ for all $q' \ge q$. In
particular, since $q \le m$, this contradicts the definition of $m$.
This proves the claim. Now, by definition of $m$, we also have $(q +
e_i) \wedge d(x_{q + e_i}) = q_i$, and hence $x_{q +
e_i}(q)\Lambda^{e_i} = \emptyset$. By~\eqref{eqn:equalpaths}, we have
$x_{q + e_i}(q) = x_{q}(q) = x(q)$, so $x(q)\Lambda^{e_i} =
\emptyset$ as claimed.

Now we aim to show that $y=[x;(p,\infty)]$. To do this we show that
\[
 (x;(p,p+n)) \approx (x_n;(p,p+n))
\]
for all $n\in\NN^k$. Fix $n\in\NN^k$.

Condition (P1) follows directly from the definition of $x$.

For (P2), note that $p\wedge d(x_n)=0$ for all $n$ implies $p\wedge
d(x)=0$. So
\[
 p-p\wedge d(x) = p = p-p\wedge d(x_n).
\]
Condition (P3) is immediate. Hence $y=[x;(p,\infty)]$. Since $p
\wedge d(x) = 0$ as observed above, taking $p_y := p$ and $\pi(y) :=
x$ establishes existence.

For uniqueness, suppose that $x'$ and $p'$ have the same properties.
Then $[x';(p', p' + m)] = y(0) = [\pi(y);(p_y, p_y + m)]$ for all $m
\in \NN^k$. Since $p' \wedge d(x') = 0 = p_y \wedge d(\pi(y))$,
condition~(P1) forces $x'(0, m \wedge d(x')) = \pi(y)(0, m \wedge
d(\pi(y)))$ for all $m \in \NN^k$. In particular, $m \wedge d(x') = m
\wedge d(\pi(y))$ for all $m \in \NN^k$, so $d(x') = d(\pi(y))$, and
since $x'$ and $\pi(y)$ agree on all initial segments, they must be
equal. Condition~(P2) together with $p' \wedge d(x') = p_y \wedge
d(\pi(y)) = 0$ forces $p' = p$. Thus $p_y$ and $\pi(y)$ are unique.

For the final statement, fix $m,n \in \NN^k$. By the preceding
paragraphs, $y(m,n) = [\pi(y); (p_y + m, p_y + n)]$, so $\pi(y(m,n))
= \pi(y)((p_y + m) \wedge d(\pi(y)), (p_y + n) \wedge d(\pi(y)))$.
Since $p_y \wedge d(\pi(y)) = 0$, we have $(p_y + m) \wedge d(\pi(y))
= m \wedge d(\pi(y))$ and similarly for $n$, and this completes the
proof.
\end{proof}

\section{Simplicity} \label{sec:simplicity}

\begin{defn}
Let $(\Lambda,d)$ be a row-finite locally convex $k$-graph. We say
$\Lambda$ is \emph{cofinal} if, for each $x\in\Lambda^{\leq\infty}$
and $v\in\Lambda^0$, there exists $n\in\NN^k$ such that $n\leq d(x)$
and $v\Lambda x(n)$ is nonempty.
\end{defn}

\begin{defn}\label{dfn:nlp}
Let $(\Lambda,d)$ be a row-finite $k$-graph, fix $v\in\Lambda^0$, and
fix $m\neq n\in\NN^k$. We say $\Lambda$ has \emph{local periodicity
$m,n$ at $v$} if for every $x\in v\Lambda^{\leq\infty}$, we have $m -
m \wedge d(x) = n - n \wedge d(x)$ and $\sigma^{m \wedge d(x)}(x) =
\sigma^{n \wedge d(x)}(x)$. We say $\Lambda$ has \emph{no local
periodicity} if $\Lambda$ does not have local periodicity $m,n$ at
$v\in\Lambda^0$ for any $m\neq n\in\NN^k$ and $v\in\Lambda^0$.
\end{defn}

\begin{rmk}\label{rmk:nlp reduces}
Note that if $\Lambda$ has no sources, then every boundary path is an
infinite path. In particular, $m \wedge d(x) = m$ and $n \wedge d(x)
= n$, for all $m,n \in \NN^k$ and all $x \in \Lambda^{\le \infty}$.
Hence the definitions of cofinality, local periodicity and no local
periodicity presented above reduce to the definitions of the same
conditions given in \cite{RoSi} when $\Lambda$ has no sources.
\end{rmk}

\begin{theorem} \label{theorem:simplicity}
Let $(\Lambda,d)$ be a row-finite locally convex $k$-graph. Then
$C^*(\Lambda)$ is simple if and only if both of the following
conditions hold:
\begin{itemize}
 \item[(1)] $\Lambda$ is cofinal; and
 \item[(2)] $\Lambda$ has no local periodicity.
\end{itemize}
\end{theorem}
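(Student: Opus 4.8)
The plan is to reduce the theorem to the no-sources simplicity result of \cite{RoSi} via Farthing's Morita equivalence. Since $C^*(\Lambda)$ is Morita equivalent to $C^*(\tLambda)$ (Theorem~3.29 of \cite{Fa}), since $\tLambda$ is a row-finite $k$-graph with no sources, and since simplicity of a $C^*$-algebra is invariant under Morita equivalence, $C^*(\Lambda)$ is simple if and only if $C^*(\tLambda)$ is simple. By \cite{RoSi}, the latter holds exactly when $\tLambda$ is cofinal and has no local periodicity; because $\tLambda$ has no sources, these are precisely the conditions of \cite{KuPa} by Remark~\ref{rmk:nlp reduces}. Thus it suffices to prove two equivalences: (a) $\Lambda$ is cofinal if and only if $\tLambda$ is cofinal; and (b) $\Lambda$ has no local periodicity if and only if $\tLambda$ has no local periodicity. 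Both will be obtained by transporting the relevant path conditions across the projection $\pi$ and the correspondence of Proposition~\ref{prop:infinitepaths}.

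For (b) I would first install a vertex-wise correspondence. Fix $\widetilde w\in\tLambda^0$, write $\widetilde w=[x;m]$, and put $v:=\pi(\widetilde w)=x(m\wedge d(x))$ and $\bar m:=m-m\wedge d(x)$. By Proposition~\ref{prop:infinitepaths} every $y\in\widetilde w\tLambda^\infty$ has the form $y=[x';(\bar m,\infty)]$ with $x'=\pi(y)\in v\Lambda^{\le\infty}$, and conversely each $x'\in v\Lambda^{\le\infty}$ yields such a $y$; here Lemma~\ref{lemmalocalperiodicity2} guarantees $\bar m\wedge d(x')=0$ for all such $x'$. For $\widetilde m\in\NN^k$, a short computation with Lemma~\ref{lemma:pathnotation} and the equality $\bar m\wedge d(x')=0$ shows that $\sigma^{\widetilde m}(y)$ has underlying boundary path $\sigma^{\widetilde m\wedge d(x')}(x')$ and shift $\bar m+\widetilde m-\widetilde m\wedge d(x')$. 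Hence, by the uniqueness clause of Proposition~\ref{prop:infinitepaths}, for $\widetilde m\ne\widetilde n$ we get $\sigma^{\widetilde m}(y)=\sigma^{\widetilde n}(y)$ if and only if $\sigma^{\widetilde m\wedge d(x')}(x')=\sigma^{\widetilde n\wedge d(x')}(x')$ and $\widetilde m-\widetilde m\wedge d(x')=\widetilde n-\widetilde n\wedge d(x')$, which are exactly the two conditions of Definition~\ref{dfn:nlp}, and which are independent of $\bar m$. Since $\tLambda$ has no sources, Remark~\ref{rmk:nlp reduces} lets us read local periodicity of $\tLambda$ at $\widetilde w$ as ``$\sigma^{\widetilde m}(y)=\sigma^{\widetilde n}(y)$ for all $y\in\widetilde w\tLambda^\infty$''. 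So $\tLambda$ has local periodicity $\widetilde m,\widetilde n$ at $\widetilde w$ precisely when $\Lambda$ has local periodicity $\widetilde m,\widetilde n$ at $v$; as $\pi$ is surjective onto $\Lambda^0$, quantifying over all $\widetilde w$ and all $\widetilde m\ne\widetilde n$ gives (b).

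For (a) the forward implication is easy: given $x\in\Lambda^{\le\infty}$ and $v\in\Lambda^0$, apply cofinality of $\tLambda$ to $y:=[x;(0,\infty)]$ (for which $\pi(y)=x$) and $\widetilde w:=\iota(v)$ to obtain $\widetilde\lambda\in\iota(v)\tLambda\, y(\widetilde n)$, then push down by the functor $\pi$, using the final formula of Proposition~\ref{prop:infinitepaths} to land $\pi(\widetilde\lambda)$ in $v\Lambda\, x(\widetilde n\wedge d(x))$ with $\widetilde n\wedge d(x)\le d(x)$. The reverse implication is the delicate part. Given $y\in\tLambda^\infty$ and $\widetilde w=[x;m]$, set $v=\pi(\widetilde w)$, $\bar m=m-m\wedge d(x)$ and $x'=\pi(y)$; cofinality of $\Lambda$ supplies $n'\le d(x')$ and $\mu\in v\Lambda\, x'(n')$. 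Taking $z:=\mu\,\sigma^{n'}(x')$, the element $\widetilde\mu:=[z;(\bar m,\bar m+d(\mu))]$ satisfies $\pi(\widetilde\mu)=\mu$, $\ra(\widetilde\mu)=\widetilde w$ and $\so(\widetilde\mu)=[\sigma^{n'}(x');\bar m]$.

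The obstacle is that this source vertex lies over $x'(n')$ but carries the shift $\bar m$ coming from $\widetilde w$, whereas $y(n')=[\sigma^{n'}(x');p_y]$ carries the shift $p_y$ coming from $y$, and these need not agree. I would resolve this by continuing along $y$ into the source-removal tree: choosing $\widetilde n\ge n'$ large in the directions in which $x'$ is stuck (so that $\widetilde n\wedge d(x')=n'$ while the shift of $y(\widetilde n)$ dominates $\bar m$) and using that $\tLambda$ has no sources, one finds $\widetilde\nu$ with $\ra(\widetilde\nu)=\so(\widetilde\mu)$ and $\so(\widetilde\nu)=y(\widetilde n)$; then $\widetilde\mu\widetilde\nu\in\widetilde w\tLambda\, y(\widetilde n)$, as required. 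This shift-matching step — verifying that the tree path $\widetilde\nu$ exists once $\widetilde n$ is taken large enough — is the main technical hurdle of the argument; by contrast, the periodicity transfer (b) is essentially bookkeeping once Proposition~\ref{prop:infinitepaths} is available.
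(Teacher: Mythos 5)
Your overall architecture is exactly the paper's: Farthing's Morita equivalence plus the no-sources simplicity theorem of \cite{RoSi} applied to $\tLambda$, reduced to the two transfer statements (a) and (b). Your argument for (b) is essentially the paper's (Lemma~\ref{lem:periodic projection} together with Proposition~\ref{prop:localperiodicity}), and your easy direction of (a) is the paper's $(\Longleftarrow)$ half of Proposition~\ref{prop:cofinal}. The problem is the hard direction of (a), where you depart from the paper's construction and leave its crux unproved — and you say so yourself. The existence of the tree path $\widetilde\nu$ is asserted (``one finds $\widetilde\nu$''), not established, and appealing to the fact that $\tLambda$ has no sources does not produce it: no sources gives \emph{some} path with range $\so(\widetilde\mu)=[\sigma^{n'}(x');\bar m]$ of any prescribed degree, but you need one whose source is the \emph{specific} vertex $y(\widetilde n)=[\sigma^{n'}(x');p_y+\widetilde n-n']$. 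What is actually needed is the explicit morphism $[\sigma^{n'}(x');(\bar m,\,p_y+\widetilde n-n')]$, which exists precisely when $\bar m\le p_y+\widetilde n-n'$; and to arrange this while keeping $\widetilde n\wedge d(x')=n'$ you must know that every direction $i$ with $\bar m_i>p_{y,i}$ satisfies $d(x')_i=n'_i$. That claim is true, but it is the real content of the step and it is where the boundary-path condition and the factorisation property enter: $\bar m_i>0$ means $m_i>d(x)_i$, so the boundary-path property of $x$ gives $v\Lambda^{e_i}=x(m\wedge d(x))\Lambda^{e_i}=\emptyset$; since $\mu\in v\Lambda$, factorisation forces $d(\mu)_i=0$ and then $s(\mu)\Lambda^{e_i}=x'(n')\Lambda^{e_i}=\emptyset$; finally, if $d(x')_i>n'_i$ then $x'(n',n'+e_i)\in x'(n')\Lambda^{e_i}$, a contradiction, so $d(x')_i=n'_i$. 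Without this verification your choice of $\widetilde n$ may simply fail to exist: a priori there could be a direction with $\bar m_i>p_{y,i}$ but $d(x')_i>n'_i$, and then increasing $\widetilde n_i$ moves $y(\widetilde n)$ off the tree over $x'(n')$, so $\widetilde\mu\widetilde\nu$ cannot be formed. (A smaller omission: that $z=\mu\sigma^{n'}(x')$ is again a boundary path is itself a consequence of local convexity, via Lemma~2.10 of \cite{RaSiYe}, and should be cited.)

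For comparison, the paper sidesteps the shift-matching problem by ordering the steps differently: before applying cofinality it moves along the underlying boundary path $y$ of the target vertex $[y;p]$ to a position $q\ge p$ at which $y(q\wedge d(y))\Lambda^{e_i}=\emptyset$ for every $i$ with $d(y)_i<\infty$, \emph{then} applies cofinality of $\Lambda$ to connect the projection of the infinite path to that stuck vertex, and splices to obtain a boundary path $y'$ with $[y';q]=[y;q]$. The composite $[y;(p,q)][y';(q,q+q\wedge d(y)+d(\lambda)+m)]$ then lands on the infinite path directly, with no second leg and no shift bookkeeping. Your route can be completed by inserting the argument sketched above, so the gap is fillable; but as written, the key step of the cofinality transfer — which you correctly identify as the main technical hurdle — is missing.
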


We prove Theorem \ref{theorem:simplicity} at the end of the
section. To do so, we first establish two key Propositions.

\begin{prop} \label{prop:cofinal}
Let $(\Lambda,d)$ be a row-finite, locally convex $k$-graph.
Then $\Lambda$ is cofinal if and only if $\tLambda$ is cofinal.
\end{prop}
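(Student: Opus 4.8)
The plan is to prove the two implications separately, using the projection $\pi:\tLambda\to\Lambda$ of Proposition~\ref{prop:infinitepaths} to transport cofinality data between the two graphs, together with the identity $\pi\circ\pi=\pi$ and the fact that $\pi$ fixes $\iota(\Lambda)$ pointwise.

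First I would dispatch the easy direction, that cofinality of $\tLambda$ implies cofinality of $\Lambda$. Given $x\in\Lambda^{\leq\infty}$ and $v\in\Lambda^0$, I would form the infinite path $\tilde x:=[x;(0,\infty)]\in\tLambda^\infty$; since $0\wedge d(x)=0$, the uniqueness clause of Proposition~\ref{prop:infinitepaths} identifies $\pi(\tilde x)=x$ and $p_{\tilde x}=0$. Applying cofinality of $\tLambda$ to $\tilde x$ and the vertex $\iota(v)\in\tLambda^0$ yields $n\in\NN^k$ and $\tilde\mu\in\iota(v)\tLambda\,\tilde x(n)$. Pushing this down by the functor $\pi$ gives $\ra(\pi(\tilde\mu))=\pi(\iota(v))=v$, while the final formula of Proposition~\ref{prop:infinitepaths} gives $\so(\pi(\tilde\mu))=\pi(\tilde x(n))=x(n\wedge d(x))$. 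Setting $n':=n\wedge d(x)\leq d(x)$, we obtain $\pi(\tilde\mu)\in v\Lambda x(n')$, so $\Lambda$ is cofinal.

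For the converse, suppose $\Lambda$ is cofinal and fix $y\in\tLambda^\infty$ and $w\in\tLambda^0$. Using Proposition~\ref{prop:infinitepaths} write $y=[\pi(y);(p_y,\infty)]$ with $p_y\wedge d(\pi(y))=0$, and using the last statement of Lemma~\ref{lemma:pathnotation} write $w=[z;m]$ with $m\wedge d(z)=0$, so that $\pi(w)=r(z)$. Applying cofinality of $\Lambda$ to the boundary path $\pi(y)$ and the vertex $r(z)$ produces $n\leq d(\pi(y))$ and $\lambda\in r(z)\Lambda\,\pi(y)(n)$. The goal is then to lift $\lambda$ to a morphism $\gamma\in\tLambda$ with $\ra(\gamma)=w$ and $\so(\gamma)=y(n^*)$ for a suitable $n^*$, since any such $\gamma$ witnesses $w\tLambda y(n^*)\neq\emptyset$ and hence the cofinality of $\tLambda$. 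I would build $\gamma$ from the boundary path $w_1:=\lambda\,\sigma^n(\pi(y))$, taking $\gamma:=[w_1;(m,\,m+d(\lambda)+c)]$ for a vector $c\in\NN^k$ to be determined; that $\ra(\gamma)=[w_1;m]=w$ follows once one checks $m\wedge d(w_1)=0$.

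The main obstacle is exactly this lifting step, and in particular reconciling the two ``virtual'' exponents $m$ (coming from $w$) and $p_y$ (coming from $y$), which are a priori unrelated. The key structural input is that in any coordinate $i$ with $m_i>0$ we have $r(z)\Lambda^{e_i}=\emptyset$, and then a short factorisation argument (prepend $\lambda$ to a hypothetical edge out of $\pi(y)(n)$ and split off the degree-$e_i$ initial segment) forces $\pi(y)(n)\Lambda^{e_i}=\emptyset$, whence the boundary-path condition gives $d(\pi(y))_i=n_i$. Armed with this, I would compute $\so(\gamma)=[w_1;\,m+d(\lambda)+c]$ by normalising via Lemma~\ref{lemma:pathnotation}, obtaining the underlying shifted path $\sigma^{\,n+c\wedge(d(\pi(y))-n)}(\pi(y))$ together with an explicit virtual exponent, and then solve coordinatewise for $c$ and $n^*$ so that this vertex coincides with $y(n^*)=[\pi(y);(p_y+n^*)]$. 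The verification reduces to elementary $\wedge$ and truncation identities in $\NN^k$, handled separately on the coordinate sets $\{i:p_{y,i}>0\}$ and $\{i:m_i>0\}$. I expect the bookkeeping with these truncations (and with coordinates where $d(\pi(y))_i=\infty$) to be the only genuinely delicate part; the conceptual content is entirely contained in the emptiness/factorisation observation above.
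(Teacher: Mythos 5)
Your proposal is correct, and your easy direction coincides with the paper's: both push a morphism of $\iota(v)\tLambda\,[x;(0,\infty)](n)$ down through the projection $\pi$ and invoke the final formula of Proposition~\ref{prop:infinitepaths}. In the hard direction, however, you take a genuinely different route. The paper works with an arbitrary representative $[y;p]$ of the target vertex, chooses $q\geq p$ so large that $y(q\wedge d(y))\Lambda^{e_i}=\emptyset$ whenever $d(y)_i<\infty$, applies cofinality of $\Lambda$ at the pushed-out vertex $y(q\wedge d(y))$, splices $y' := y(0,q\wedge d(y))\,\lambda\,\sigma^{m\wedge d(x)+n}(x)$ (a boundary path by Lemma~2.10 of \cite{RaSiYe}, which is where local convexity enters), verifies $[y';q]=[y;q]$, and exhibits the connecting morphism as a composition $[y;(p,q)]\,\tcirc\,[y';(q,\,q+q\wedge d(y)+d(\lambda)+m)]$. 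You instead normalise both inputs at the outset, apply cofinality of $\Lambda$ directly at $r(z)=\pi(w)$, and lift the single spliced path $w_1=\lambda\,\sigma^n(\pi(y))$ (local convexity enters for you through the preliminaries' assertion that prepending $\lambda$ to a boundary path gives a boundary path). Your normalisation $m\wedge d(z)=0$ does exactly the job of the paper's choice of $q$: it gives $r(z)\Lambda^{e_i}=\emptyset$ whenever $m_i>0$, whence your factorisation argument --- or, more directly, Lemma~\ref{lemmalocalperiodicity2} applied to $z$ and $w_1$ --- yields $m\wedge d(w_1)=0$, hence $d(\lambda)_i=0$ and $d(\pi(y))_i=n_i$ on those coordinates and $\ra(\gamma)=[w_1;m]=w$. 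Your deferred bookkeeping does close, and with a uniform rather than coordinatewise choice: take $c:=p_y$ and $n^*:=m+n$; then $(m+d(\lambda)+p_y)\wedge d(w_1)=d(\lambda)$, so $\so(\gamma)=[\sigma^n(\pi(y));\,m+p_y]$, and checking (V1)--(V2) (using $d(\pi(y))_i=n_i$ when $m_i>0$, and $d(\pi(y))_i=n_i=0$ when $p_{y,i}>0$) shows this equals $[\pi(y);\,p_y+m+n]=y(m+n)$. What your organisation buys is economy: a single morphism $[w_1;(m,\,m+d(\lambda)+p_y)]$ rather than a composition, and no auxiliary $q$ or comparison $[y';q]=[y;q]$. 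What the paper's buys is that it never needs to solve for the displacement $c$, since the virtual part of the target vertex is absorbed into the first leg $[y;(p,q)]$.
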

\begin{proof}
($\Longrightarrow)$ Suppose $\Lambda$ is cofinal. Fix
$z\in\tLambda^{\infty}$ and $[y;p]\in\tLambda^0$. Proposition
\ref{prop:infinitepaths} implies $z=[x;(m,\infty)]$ for some
$x\in\Lambda^{\leq\infty}$ and $m\in\NN^k$. We must show that there
exists $p' \in \NN^k$ such that $[y;p] \tLambda [x;(m,\infty)](p')
\not=\emptyset$.

 Fix $q\geq p$ such that
$y(q\wedge d(y))\Lambda^{e_i}=\emptyset$ whenever $d(y)_i<\infty$.
Since $\Lambda$ is cofinal,
\[
 y(q\wedge d(y))\Lambda x(m\wedge d(x)+n)\neq\emptyset
 \quad\text{for some $n\leq d(x)-m\wedge d(x)$.}
\]
Fix a path $\lambda\in y(q\wedge d(y))\Lambda x(m\wedge d(x)+n)$.
Lemma~2.10 of \cite{RaSiYe} implies that $y':=y(0,q\wedge
d(y))\,\lambda \,\sigma^{m\wedge d(x)+n}(x)\in\Lambda^{\leq\infty}$.
We claim that $q\wedge d(y)=q\wedge d(y')$. We have
\begin{eqnarray*}
 q\wedge d(y') &=& q\wedge d(y(0,q\wedge d(y))\,\lambda \,\sigma^{m\wedge
d(x)+n}(x)) \\
 &=& q\wedge(q\wedge d(y)+d(\lambda)+(d(x)-m\wedge d(x)-n)).
\end{eqnarray*}
For $i\in\{1,\dots,k\}$ such that $d(y)_i<\infty$, we have $y(q\wedge
d(y))\Lambda^{e_i}=\emptyset$ by choice of $q$, so $d(\lambda
\,x(m\wedge d(x)+n,d(x)))_i=0$ and
\[
 (q\wedge(q\wedge d(y)+d(\lambda)+(d(x)-m\wedge d(x)-n)))_i = (q\wedge(q\wedge d(y)))_i = (q\wedge d(y))_i.
\]
For $i\in\{1,\dots,k\}$ such that $d(y)_i=\infty$, we have
\[
 (q\wedge(q\wedge d(y)+d(\lambda)+(d(x)-m\wedge d(x)-n)))_i = q_i = (q\wedge d(y))_i.
\]
Hence $q\wedge d(y')=q\wedge d(y)$ and so $[y';q]=[y;q]$. Now,
consider $\mu := [y;(p,q)][y';(q,q+q\wedge d(y)+d(\lambda)+m)]$. We
have $r(\mu)=[y;p]$ and
\begin{eqnarray*}
 s(\mu) &=& [y';q+q\wedge d(y)+d(\lambda)+m] \\
 &=& [\sigma^{(q\wedge d(y)+d(\lambda))}(y');q+m] \\
 &=& [\sigma^{m\wedge d(x)+n}(x);q+m] \\
 &=& [x;m\wedge d(x)+n+q+m] \\
 &=& [x;(m,\infty)](m\wedge d(x)+n+q).
\end{eqnarray*}
Hence $\mu\in[y;p]\tLambda[x;(m,\infty)](m\wedge d(x)+n+q)$. As
$[y;p] \in \tLambda^0$ and $z = [x;(m,\infty)] \in \tLambda^\infty$
were arbitrary, it follows that $\tLambda$ is cofinal.

$(\Longleftarrow)$ Suppose $\tLambda$ is cofinal. Fix
$v\in\Lambda^0$ and $x\in\Lambda^{\leq\infty}$. Since
$\tLambda$ is cofinal we may fix $n\in\NN^k$ such that
\[
 v\tLambda[x;(0,\infty)](n)\neq\emptyset;
\]
say $\lambda\in v\tLambda[x;(0,\infty)](n)$. Then
\[
 r(\pi(\lambda)) = \pi(r(\lambda))= \pi(v) = v
\]
and
\[
 s(\pi(\lambda)) = \pi(s(\lambda))= \pi([x;n]) = [x;n\wedge d(x)]
\]
Hence $\pi(\lambda)\in v\Lambda[x;n\wedge d(x)]$. Once again, since
$v \in \Lambda^0$ and $x \in \Lambda^{\le\infty}$ were arbitrary, it
follows that $\Lambda$ is cofinal.
\end{proof}

\begin{prop} \label{prop:localperiodicity}
Let $(\Lambda,d)$ be a row-finite locally convex $k$-graph. Fix
$v \in \tLambda^0$ and $m \not= n \in \NN^k$. Then $\tLambda$
has local periodicity $m,n$ at $v$ if and only if $\Lambda$ has
local periodicity $m,n$ at $\pi(v)$. In particular, $\tLambda$
has no local periodicity if and only if $\Lambda$ has no local
periodicity.
\end{prop}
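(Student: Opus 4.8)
The plan is to exploit Proposition~\ref{prop:infinitepaths}, which sets up a correspondence between infinite paths of $\tLambda$ and boundary paths of $\Lambda$, in order to convert the shift-tail condition defining local periodicity on $\tLambda$ into the two-part condition defining it on $\Lambda$. Since $\tLambda$ has no sources, every $z \in v\tLambda^\infty$ has $d(z) = (\infty,\dots,\infty)$, so $m \wedge d(z) = m$ and $n \wedge d(z) = n$; thus (as in Remark~\ref{rmk:nlp reduces}) $\tLambda$ has local periodicity $m,n$ at $v$ precisely when $\sigma^m(z) = \sigma^n(z)$ for every $z \in v\tLambda^\infty$. The goal is to show this is equivalent to the conditions $m - m\wedge d(x) = n - n\wedge d(x)$ and $\sigma^{m\wedge d(x)}(x) = \sigma^{n\wedge d(x)}(x)$ holding for every $x \in \pi(v)\Lambda^{\leq\infty}$, which is exactly local periodicity $m,n$ at $\pi(v)$.

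The central step is to compute the normal form of $\sigma^m(z)$. Writing $z = [\pi(z);(p_z,\infty)]$ with $p_z \wedge d(\pi(z)) = 0$ as in Proposition~\ref{prop:infinitepaths}, I would evaluate $\sigma^m(z)(0,q) = z(m,m+q) = [\pi(z);(p_z + m,\, p_z + m + q)]$ and apply Lemma~\ref{lemma:pathnotation}. Because $p_z \wedge d(\pi(z)) = 0$, a coordinatewise check gives $(p_z + m)\wedge d(\pi(z)) = m\wedge d(\pi(z))$, so the lemma rewrites this as $[\sigma^{m\wedge d(\pi(z))}(\pi(z));\,(p',\,p'+q)]$ where $p' = p_z + (m - m\wedge d(\pi(z)))$; using \eqref{eqn:minequalzero} together with $p_z \wedge d(\pi(z)) = 0$ one sees $p' \wedge d(\sigma^{m\wedge d(\pi(z))}(\pi(z))) = 0$. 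Hence $\sigma^m(z) = [\sigma^{m\wedge d(\pi(z))}(\pi(z));\,(p',\infty)]$ is already in the normal form of Proposition~\ref{prop:infinitepaths}, so by its uniqueness clause $\pi(\sigma^m(z)) = \sigma^{m\wedge d(\pi(z))}(\pi(z))$ and $p_{\sigma^m(z)} = p_z + (m - m\wedge d(\pi(z)))$, and likewise for $n$. Appealing once more to uniqueness, $\sigma^m(z) = \sigma^n(z)$ holds if and only if $\sigma^{m\wedge d(\pi(z))}(\pi(z)) = \sigma^{n\wedge d(\pi(z))}(\pi(z))$ and $m - m\wedge d(\pi(z)) = n - n\wedge d(\pi(z))$, which are precisely the local periodicity conditions on the boundary path $\pi(z)$.

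It remains to match the two path spaces. Normalising a representative of the fixed vertex as $v = [w;q]$ with $q \wedge d(w) = 0$, one gets $\pi(v) = r(w)$. For any $z \in v\tLambda^\infty$, comparing the normal forms of the object $r(z) = [\pi(z); p_z]$ and of $v = [w;q]$ via (V1)--(V2) gives $p_z = q$ and $r(\pi(z)) = r(w) = \pi(v)$, so $\pi(z) \in \pi(v)\Lambda^{\leq\infty}$. Conversely, given any $x \in \pi(v)\Lambda^{\leq\infty}$, Lemma~\ref{lemmalocalperiodicity2} applied to $w$ and $x$ (which share the range $\pi(v)$) yields $q \wedge d(x) = 0$, so $[x;(q,\infty)]$ is a genuine infinite path lying in $v\tLambda^\infty$ with $\pi([x;(q,\infty)]) = x$. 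Thus $z \mapsto \pi(z)$ carries $v\tLambda^\infty$ onto $\pi(v)\Lambda^{\leq\infty}$, and combining this with the previous paragraph gives the stated equivalence. For the final assertion, I would use that $\pi$ maps $\tLambda^0$ onto $\Lambda^0$: then $\tLambda$ has local periodicity at some $(v,m,n)$ if and only if $\Lambda$ has local periodicity at $(\pi(v),m,n)$, and as $\pi(v)$ ranges over all of $\Lambda^0$ the two ``no local periodicity'' statements coincide.

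The main obstacle I anticipate is the bookkeeping in the second paragraph: correctly identifying the normal form of $\sigma^m(z)$ requires the coordinatewise case analysis establishing $(p_z + m)\wedge d(\pi(z)) = m\wedge d(\pi(z))$ and $p' \wedge d(\sigma^{m\wedge d(\pi(z))}(\pi(z))) = 0$, both of which hinge on the normalisation $p_z \wedge d(\pi(z)) = 0$ and on \eqref{eqn:minequalzero}. Once those identities are in hand, the uniqueness clause of Proposition~\ref{prop:infinitepaths} does the real work, converting an equality of infinite paths into the pair of numerical and shift conditions; the remaining care is to ensure, via Lemma~\ref{lemmalocalperiodicity2}, that the correspondence between $v\tLambda^\infty$ and $\pi(v)\Lambda^{\leq\infty}$ is onto, so that no boundary path $x$ is missed when transferring the hypothesis in either direction.
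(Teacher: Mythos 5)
Your proposal is correct, and its overall architecture matches the paper's: both arguments rest on the path-space correspondence $\pi(v)\Lambda^{\le\infty} = \{\pi(z) : z \in v\tLambda^\infty\}$, obtained exactly as you obtain it, from Lemma~\ref{lemmalocalperiodicity2} together with the normal forms of Proposition~\ref{prop:infinitepaths} and conditions (V1)--(V2). Where you genuinely diverge is in the per-path translation of $\sigma^m(z) = \sigma^n(z)$ into the pair of conditions on $\pi(z)$. The paper isolates this as Lemma~\ref{lem:periodic projection} and proves it in two separate directions: the ``if'' direction via the shift formulae of Lemma~\ref{lemmasigmamap}, and the ``only if'' direction by extracting condition~(b) from (V2) and then invoking Lemma~\ref{lemmalocallyperiodic1} (that $m_i \neq n_i$ forces $d(\pi(y))_i = \infty$) so that the two shifted boundary paths have a common domain on which they can be compared via the final statement of Proposition~\ref{prop:infinitepaths}. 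You instead compute the full normal form
\[
\sigma^m(z) = \bigl[\sigma^{m\wedge d(\pi(z))}(\pi(z));\,\bigl(p_z + m - m\wedge d(\pi(z)),\,\infty\bigr)\bigr],
\]
verify the base-point condition $p'\wedge d(\sigma^{m\wedge d(\pi(z))}(\pi(z))) = 0$, and then let the uniqueness clause of Proposition~\ref{prop:infinitepaths} deliver both directions of the equivalence simultaneously. This is a real simplification: it makes Lemma~\ref{lemmalocallyperiodic1} unnecessary for this proposition (the paper still needs it later, in the lemma comparing Definition~\ref{dfn:nlp} with Equation~\eqref{eq:oldnlp}), and it subsumes the content of Lemma~\ref{lemmasigmamap} in a single computation via Lemma~\ref{lemma:pathnotation}. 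The price is that the correctness of your argument hinges entirely on the two coordinatewise identities $(p_z+m)\wedge d(\pi(z)) = m\wedge d(\pi(z))$ and $p'\wedge d(\sigma^{m\wedge d(\pi(z))}(\pi(z))) = 0$, which you only sketch; both do hold (each coordinate splits into the cases $(p_z)_i = 0$ and $d(\pi(z))_i = 0$, using \eqref{eqn:minequalzero} for the second identity), so your proof goes through, and arguably more cleanly than the paper's.
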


In order to prove this proposition, we require some preliminary
results.

\begin{lemma} \label{lemmasigmamap}
Let $(\Lambda,d)$ be a row-finite locally convex $k$-graph. Then for
any $[x;(n,\infty)]\in\tLambda^\infty$ and $m\in\NN^k$, we have
$\sigma^m([x;(n,\infty)]) = [x;(n+m,\infty)]$. If $m\leq d(x)$, we
also have $ \sigma^m([x;(n,\infty)]) = [\sigma^m(x);(n,\infty)]$.
\end{lemma}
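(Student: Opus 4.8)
The plan is to prove both identities by evaluating the infinite paths involved at an arbitrary morphism $(p,q)\in\Mor(\Omega_k)$, since two degree-preserving functors out of $\Omega_k$ coincide precisely when they agree on every morphism. Recall that the shift acts on an infinite path $z\in\tLambda^\infty$ by $\sigma^m(z)(p,q)=z(p+m,q+m)$, and that $[x;(n,\infty)](p,q)=[x;(n+p,n+q)]$ by definition. For the first identity I would simply unfold these: evaluating the left-hand side at $(p,q)$ gives
\[
\sigma^m([x;(n,\infty)])(p,q)=[x;(n,\infty)](p+m,q+m)=[x;(n+m+p,n+m+q)],
\]
which is exactly $[x;(n+m,\infty)](p,q)$. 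So the first identity is immediate from the definitions, requiring no manipulation of equivalence classes.

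For the second identity I would use the first to reduce the claim to $[x;(n+m,\infty)]=[\sigma^m(x);(n,\infty)]$, and again check this morphism-by-morphism. Writing $y:=\sigma^m(x)$, so that $d(y)=d(x)-m$ since $m\le d(x)$, the claim becomes the assertion that $(x;(n+m+p,n+m+q))\approx(y;(n+p,n+q))$ for every $p\le q$, which I would verify directly against (P1)--(P3). Condition (P3) is automatic, as both pairs compute $q-p$. Conditions (P1) and (P2) rest on the single identity
\[
(a-m)\wedge d(y)+m = a\wedge d(x)\qquad(a\in\NN^k),
\]
obtained by rewriting $(a-m)\wedge(d(x)-m)$ via the distribution laws \eqref{minimumdistributesoveraddition} and \eqref{minimumdistributesoversubtraction}; combined with the shift formula $y(s,t)=x(s+m,t+m)$ this converts the right-hand representative back into the left-hand one and settles (P1), while the subtraction form $(a-m)\wedge d(y)=a\wedge d(x)-m$ settles (P2).

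The argument is essentially mechanical, so I do not anticipate a serious obstacle; the only point demanding genuine care is the bookkeeping in (P1) and (P2), where one must track how the coordinatewise minimum against $d(y)=d(x)-m$ transforms under re-adding $m$. Since $d(x)$ may be infinite in some coordinates while \eqref{minimumdistributesoversubtraction} is stated for elements of $\NN^k$, I would apply the distribution laws coordinatewise, checking separately that in the coordinates where $d(x)_i=\infty$ both sides reduce to the same finite value. I expect this reconciliation of the minimum-with-degree terms to be the one place where a sign or an index could slip, and everything else follows by substitution.
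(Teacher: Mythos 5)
Your proposal is correct and follows essentially the same route as the paper: both arguments evaluate the infinite paths morphism-by-morphism (the paper uses initial segments $(0,p)$, which suffices by the factorisation property), reduce the second identity to a direct verification of (P1)--(P3) for the pairs $(x;(n+m,n+m+p))$ and $(\sigma^m(x);(n,n+p))$, and settle (P1) and (P2) by exactly the distributivity manipulation $a\wedge d(\sigma^m(x))+m=(a+m)\wedge d(x)$ that you identify as the key identity. Your extra care about coordinates where $d(x)_i=\infty$ is a sound precaution that the paper glosses over, but it changes nothing of substance.
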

\begin{proof}
For any $p\in\NN^k$, we have
\begin{eqnarray*}
 \sigma^m([x;(n,\infty)])(0,p) &=& [x;(n,\infty)](m,m+p) \\
 &=& [x;(n+m,n+m+p)] \\
 &=& [x;(n+m,\infty)](0,p).
\end{eqnarray*}
Since $\sigma^m([x;(n,\infty)])$ and $[x;(n+m,\infty)]$ agree
on every initial segment we conclude that they are equal. Now,
fix $m\leq d(x)$. We must show that
$\sigma^m([x;(n,\infty)])(0,p)=[\sigma^m(x);(n,\infty)](0,p)$~for
all $p\in\NN^k$, that is we must check that
$(x;(n+m,n+m+p))\approx(\sigma^m(x);(n,n+p))$. For (P1), fix
$p\in\NN^k$ and calculate
\begin{align*}
 \sigma^m(x)&(n\wedge d(\sigma^m(x)),(n+p)\wedge d(\sigma^m(x))) \\
 &= x(n\wedge d(\sigma^m(x))+m,(n+p)\wedge d(\sigma^m(x))+m) \\
 &= x((n+m)\wedge (d(\sigma^m(x))+m),(n+m+p)\wedge (d(\sigma^m(x))+m))
    \quad\text{by \ref{minimumdistributesoveraddition}} \\
 &= x((n+m)\wedge d(x),(n+m+p)\wedge d(x)).
\end{align*}
For (P2), we have
\begin{eqnarray*}
 n-n\wedge d(\sigma^m(x)) &=& (n+m)-(n\wedge d(\sigma^m(x))+m) \\
                          &=& (n+m)-(n+m)\wedge (d(\sigma^m(x))+m)
                            \quad\text{by \ref{minimumdistributesoveraddition}}\\
                          &=& (n+m)-(n+m)\wedge d(x).
\end{eqnarray*}
For (P3), we have $(n+m+p)-(n+m) = p = (n+p)-n$.
\end{proof}

\begin{lemma} \label{lemmalocallyperiodic1}
Let $\Lambda$ be a row-finite locally convex $k$-graph. Suppose that
$x \in \Lambda^{\le \infty}$ and $p,m,n \in \NN^k$ satisfy
$\sigma^m([x;(p,\infty)]) = \sigma^n([x;(p,\infty)])$. Then $d(x)_i =
0$ whenever $m_i\neq n_i$.
\end{lemma}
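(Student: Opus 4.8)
My plan is to turn the hypothesis into an equality of two infinite paths of the form $[x;(\,\cdot\,,\infty)]$ and then read off the defining relations of $\approx$ one coordinate at a time. First I would use Lemma~\ref{lemmasigmamap} to evaluate the two shifts, $\sigma^m([x;(p,\infty)]) = [x;(p+m,\infty)]$ and $\sigma^n([x;(p,\infty)]) = [x;(p+n,\infty)]$, so that the hypothesis becomes
\[
[x;(p+m,\infty)] = [x;(p+n,\infty)].
\]
Writing $\mu := p+m$ and $\nu := p+n$, I would then fix a coordinate $i$ with $m_i \neq n_i$, equivalently $\mu_i \neq \nu_i$, and determine $d(x)_i$.

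Since the two infinite paths agree on every initial segment, their values on $(0,s)$ coincide for all $s \in \NN^k$; by the definition of $[x;(\,\cdot\,,\infty)]$ this says exactly that $(x;(\mu,\mu+s)) \approx (x;(\nu,\nu+s))$ for every $s$. From this I would extract two coordinatewise identities, writing $D := d(x)_i$. Relation (P2) gives $\mu_i - \min(\mu_i,D) = \nu_i - \min(\nu_i,D)$. Comparing degrees in (P1) — the two $\Lambda$-paths appearing there are equal, and $x$ is degree-preserving, so their degrees agree — gives $(\mu+s)\wedge d(x) - \mu\wedge d(x) = (\nu+s)\wedge d(x) - \nu\wedge d(x)$, whose $i$-th coordinate, once $s_i \geq D$, collapses to $\min(\mu_i,D) = \min(\nu_i,D)$. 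Substituting this into the (P2) identity yields $\mu_i = \nu_i$, contradicting $\mu_i \neq \nu_i$. Hence $D = d(x)_i$ cannot be finite, and so $d(x)_i = \infty$.

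The step I expect to be most delicate is recognising that (P1) is genuinely needed: relation (P2) on its own is consistent with a finite value of $d(x)_i$, since whenever $\mu_i,\nu_i \leq D$ it reduces to the trivial identity $0 = 0$. The contradiction therefore relies on pairing (P2) with the degree information coming from (P1) at large $s$. It is worth recording the degenerate case explicitly: setting $D=0$ in (P2) gives $\mu_i = \nu_i$ at once, so the hypothesis is in fact \emph{incompatible} with $d(x)_i = 0$ when $m_i \neq n_i$; the correct conclusion is that $x$ is infinite in precisely the coordinates where $m$ and $n$ differ, i.e. $d(x)_i = \infty$. This is exactly the form needed to verify $m - m\wedge d(x) = n - n\wedge d(x)$ when transferring local periodicity between $\tLambda$ and $\Lambda$, since for such $i$ both sides vanish in coordinate $i$.
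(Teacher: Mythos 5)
Your proof is correct and follows essentially the paper's own route: both begin with Lemma~\ref{lemmasigmamap} to reduce the hypothesis to $[x;(p+m,\infty)]=[x;(p+n,\infty)]$, evaluate on a segment whose $i$th coordinate is large enough that the wedge with $d(x)$ saturates at $d(x)_i$, and conclude $m_i=n_i$ whenever $d(x)_i<\infty$; your pairing of (P2) with the degree comparison from (P1) is a trivial repackaging of the paper's single application of (V2) to the equal source vertices $[x;p+m+l]=[x;p+n+l]$, since your two coordinatewise identities sum to exactly that one. You are also right that the stated conclusion ``$d(x)_i=0$'' is a typo for ``$d(x)_i=\infty$'': the paper's own proof ends with precisely your conclusion, and the later invocation in Lemma~\ref{lem:periodic projection} reads ``$d(\pi(y))_i=\infty$ whenever $m_i\neq n_i$.''
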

\begin{proof}
We prove contrapositive statement. Suppose $d(x)_i<\infty$ and fix
$l\in\NN^k$ such that $(p+m+l)_i>d(x)_i$ and $(p+n+l)_i>d(x)_i$. Then
\begin{align*}
 \sigma^m([x;&(p,\infty)])= \sigma^n([x;(p,\infty)]) \\
 &\Longrightarrow [x;(p+m,\infty)]=[x;(p+n,\infty)] \ \mbox{by Lemma \ref{lemmasigmamap}} \\
 &\Longrightarrow [x;(p+m,\infty)](0,l)=[x;(p+n,\infty)](0,l) \\
 &\Longrightarrow [x;(p+m,p+m+l)]=[x;(p+n,p+n+l)] \\ & \hspace{60mm} \mbox{by definition of } [x;(p,\infty)] \\
 &\Longrightarrow ((p+m+l)-(p+m+l)\wedge d(x))_i = ((p+n+l)-(p+n+l)\wedge d(x))_i  \\ & \hspace{60mm} \mbox{by (V2)}\\
 &\Longrightarrow ((p+m+l)-d(x))_i = ((p+n+l)-d(x))_i \\ & \hspace{60mm} \mbox{since } (p+m+l), (p+n+l) > d(x)_i \\
 &\Longrightarrow m_i=n_i.
\end{align*}
So $d(x)_i < \infty$ implies $m_i = n_i$ as claimed.
\end{proof}

\begin{lemma}\label{lem:periodic projection}
Let $\Lambda$ be a row-finite locally convex $k$-graph. Fix $y \in
\tLambda^\infty$ and $m,n \in \NN^k$. Then $\sigma^m(y) =
\sigma^n(y)$ if and only if: (a) $\sigma^{m \wedge d(\pi(y))}(\pi(y))
= \sigma^{n \wedge d(\pi(y))}(\pi(y))$; and~(b) $m - m \wedge
d(\pi(y)) = n - n \wedge d(\pi(y))$.
\end{lemma}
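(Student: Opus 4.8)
The plan is to reduce the shift-equality on the infinite path $y \in \tLambda^\infty$ to the two conditions (a) and (b) by applying the projection $\pi$ together with the explicit description of $y$ furnished by Proposition~\ref{prop:infinitepaths}. By that proposition I may write $y = [\pi(y); (p_y, \infty)]$ with $p_y \wedge d(\pi(y)) = 0$, and Lemma~\ref{lemmasigmamap} then gives the clean formulae $\sigma^m(y) = [\pi(y); (p_y + m, \infty)]$ and $\sigma^n(y) = [\pi(y); (p_y + n, \infty)]$. So the statement $\sigma^m(y) = \sigma^n(y)$ is precisely $[\pi(y); (p_y+m, \infty)] = [\pi(y); (p_y+n, \infty)]$, an equality of infinite paths in $\tLambda$, and my whole task becomes translating this single equivalence-class equality into the pair of conditions involving $\pi(y)$ directly.

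For the forward direction I would argue as follows. Evaluating both sides of $\sigma^m(y) = \sigma^n(y)$ on initial segments $(0,l)$ and using the definition of $[x;(p,\infty)]$ reduces the equality to $[\pi(y); (p_y+m, p_y+m+l)] = [\pi(y); (p_y+n, p_y+n+l)]$ for all $l$. Now I apply $\pi$ (or equivalently read off conditions (P1) and (P2) of $\approx$, noting that $p_y \wedge d(\pi(y)) = 0$). Condition (P2) applied to these classes yields, after simplification using $(p_y+m) - (p_y+m)\wedge d(\pi(y)) = m - m\wedge d(\pi(y))$ and the analogous identity for $n$ — which hold exactly because $p_y \wedge d(\pi(y)) = 0$ — the equality (b). For (a), the last statement of Proposition~\ref{prop:infinitepaths} gives $\pi(\sigma^m(y)(0,\cdot)) = \pi(y)(m \wedge d(\pi(y)), \cdot)$, i.e.\ applying $\pi$ to $\sigma^m(y) = \sigma^n(y)$ shows that the boundary paths $\sigma^{m \wedge d(\pi(y))}(\pi(y))$ and $\sigma^{n \wedge d(\pi(y))}(\pi(y))$ agree on every initial segment and hence are equal, which is (a).

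For the converse I would assume (a) and (b) and verify directly that $(\pi(y);(p_y+m, p_y+m+l)) \approx (\pi(y);(p_y+n, p_y+n+l))$ for every $l \in \NN^k$, since establishing these equalities of initial segments gives $\sigma^m(y) = \sigma^n(y)$ via Lemma~\ref{lemmasigmamap} and the definition of $[x;(p,\infty)]$. Writing $x = \pi(y)$ for brevity, condition (P3) is automatic since $(p_y+m+l)-(p_y+m) = l = (p_y+n+l)-(p_y+n)$. Condition (P2) follows from (b) after using $p_y \wedge d(x) = 0$ to rewrite $(p_y+m)-(p_y+m)\wedge d(x)$ as $m - m\wedge d(x)$. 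Condition (P1) is the part requiring the most care: I need $x((p_y+m)\wedge d(x), (p_y+m+l)\wedge d(x)) = x((p_y+n)\wedge d(x), (p_y+n+l)\wedge d(x))$, and here I would feed in (a), rewriting each side in terms of $\sigma^{m\wedge d(x)}(x)$ and $\sigma^{n\wedge d(x)}(x)$ (again using $p_y \wedge d(x)=0$ to absorb $p_y$) so that the hypothesis $\sigma^{m\wedge d(x)}(x) = \sigma^{n\wedge d(x)}(x)$ makes the two sides coincide.

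The main obstacle is the bookkeeping in verifying (P1) in the converse direction: the distributivity identities \eqref{minimumdistributesoveraddition} and \eqref{minimumdistributesoversubtraction} must be deployed carefully to reconcile the cut-off points $(p_y+m+l)\wedge d(x)$ appearing on the two sides, and one must check that condition (b) guarantees the tail segments line up correctly past the point where $x$ terminates in each coordinate. The identity $p_y \wedge d(x) = 0$ is the crucial simplifying lever throughout, as it is exactly what lets me replace every $p_y + m$ under a $\wedge d(x)$ with $m$, and I expect that once this normalisation is systematically applied the coordinatewise case analysis (comparing $m_i$ with $d(x)_i$) closes without surprises.
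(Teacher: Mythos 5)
Your proof splits the same way as the paper's, and in the direction ``$\sigma^m(y)=\sigma^n(y)$ implies (a) and (b)'' it is essentially the paper's argument: both extract (b) from (P2)/(V2) using the normalisation $p_y \wedge d(\pi(y)) = 0$, and both obtain (a) by applying the final statement of Proposition~\ref{prop:infinitepaths} to the equal segments $y(m,m+p) = y(n,n+p)$. In the converse direction you diverge: you propose to verify (P1)--(P3) on every initial segment $(\pi(y);(p_y+m,p_y+m+l)) \approx (\pi(y);(p_y+n,p_y+n+l))$ directly, whereas the paper argues structurally, using Lemma~\ref{lemmasigmamap} to write $\sigma^{m \wedge d(\pi(y))}(y) = [\sigma^{m \wedge d(\pi(y))}(\pi(y));(p_y,\infty)]$, so that (a) gives $\sigma^{m\wedge d(\pi(y))}(y) = \sigma^{n\wedge d(\pi(y))}(y)$, and then (b) lets one apply the further shift $\sigma^{m - m\wedge d(\pi(y))} = \sigma^{n - n\wedge d(\pi(y))}$ to both sides. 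Your computational route does close: writing $x := \pi(y)$, the endpoint reconciliation you flag comes down to $(m+l)\wedge d(x) - m\wedge d(x) = \bigl(l + (m - m\wedge d(x))\bigr)\wedge d\bigl(\sigma^{m\wedge d(x)}(x)\bigr)$ via \eqref{minimumdistributesoversubtraction}, and (a) (which includes equality of the degrees of the two shifted paths) together with (b) makes the two sides of (P1) coincide. The paper's factorisation of the shift is simply shorter and avoids the coordinatewise case analysis.

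The one genuine gap is in your derivation of (a). You conclude that $\sigma^{m \wedge d(\pi(y))}(\pi(y))$ and $\sigma^{n \wedge d(\pi(y))}(\pi(y))$ ``agree on every initial segment and hence are equal''. But these are boundary paths whose degrees are a priori different, namely $d(\pi(y)) - m\wedge d(\pi(y))$ and $d(\pi(y)) - n\wedge d(\pi(y))$; two graph morphisms with different domains $\Omega_{k,m}$ cannot be equal no matter how many segments they share, so agreement on common initial segments does not suffice. This is exactly the point for which the paper invokes Lemma~\ref{lemmalocallyperiodic1}: the hypothesis $\sigma^m(y) = \sigma^n(y)$ forces $d(\pi(y))_i = \infty$ whenever $m_i \neq n_i$, whence $d(\pi(y)) - m\wedge d(\pi(y)) = d(\pi(y)) - n\wedge d(\pi(y))$, so the two shifted paths have the same degree and the segments you compare exhaust both of them. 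The missing fact is recoverable inside your own setup --- comparing the \emph{degrees} of the equal segments $\pi(y)\bigl(m\wedge d(\pi(y)), (m+l)\wedge d(\pi(y))\bigr)$ and $\pi(y)\bigl(n\wedge d(\pi(y)), (n+l)\wedge d(\pi(y))\bigr)$ for large $l$ yields the same conclusion --- but as written your sketch skips it, and it is the step the paper isolates as a separate lemma.
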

\begin{proof}
First suppose that (a)~and~(b) hold. By
Proposition~\ref{prop:infinitepaths}, $y =
[\pi(y);(p_y,\infty)]$ with $p_y \wedge d(\pi(y)) = 0$. Hence
\[
\sigma^{m \wedge d(\pi(y))}(y)
 = \sigma^{m \wedge d(\pi(y))}([\pi(y); (p_y,\infty)])
 = [\sigma^{m \wedge d(\pi(y))}(\pi(y)); (p_y, \infty)])
\]
by Lemma~\ref{lemmasigmamap}. Similarly, $\sigma^{n \wedge
d(\pi(y))}(y) = [\sigma^{n \wedge d(\pi(y))}(\pi(y)); (p_y,
\infty)])$, so~(a) implies that
\begin{equation}\label{eq:perp shifts equal}
\sigma^{m \wedge d(\pi(y))}(y) = \sigma^{n \wedge d(\pi(y))}(y).
\end{equation}
Using~(b) and~\eqref{eq:perp shifts equal}, we now calculate:
\[
\sigma^m(y)
 = \sigma^{m - m \wedge d(\pi(y))}(\sigma^{m \wedge d(\pi(y))}(y))
 = \sigma^{n - n \wedge d(\pi(y))}(\sigma^{n \wedge d(\pi(y))}(y))
 = \sigma^n(y).
\]

Now suppose that $\sigma^m(y) = \sigma^n(y)$. Using
Lemma~\ref{lemmasigmamap}, we see that
\[
\sigma^m(y)(0)
 = \sigma^m([\pi(y); (p_y,\infty)])(0)
 = [\pi(y); p_y + m]
\]
and similarly, $\sigma^n(y)(0) = [\pi(y); p_y + n]$. In particular,
condition~(V2) implies that
\[
(p_y + m) - ((p_y + m) \wedge d(\pi(y)))
 = (p_y + n) - ((p_y + n) \wedge d(\pi(y))).
\]
Since $p_y \wedge d(\pi(y)) = 0$, this establishes~(b). Note that by
Lemma~\ref{lemmalocallyperiodic1}, $d(\pi(y))_i = \infty$ whenever
$m_i \not= n_i$, and hence $d(\pi(y)) - m \wedge d(\pi(y)) =
d(\pi(y)) - n \wedge d(\pi(y))$. Thus $p \le d(\pi(y)) - m \wedge
d(\pi(y))$ if and only if $p \le d(\pi(y)) - n \wedge d(\pi(y))$, and
for such $p$ we may use the final statement of
Proposition~\ref{prop:infinitepaths} to calculate
\[
\sigma^{m \wedge d(\pi(y))}(\pi(y))(0,p)
 = \pi(y)(m \wedge d(\pi(y)), m \wedge d(\pi(y)) + p)
 = \pi(y(m,m + p)).
\]
Similarly, $\sigma^{n \wedge d(\pi(y))}(\pi(y))(0,p) = \pi(y(n,n +
p))$. We have $\sigma^m(y) = \sigma^n(y)$ by hypothesis, so $y(m,
m+p) = y(n, n+p)$, and hence $\pi(y(m,m + p)) = \pi(y(n,n + p))$.
Since $p \le d(\pi(y)) - m \wedge d(\pi(y)) = d(\pi(y)) - n \wedge
d(\pi(y))$ was arbitrary, this completes the proof.
\end{proof}

\begin{proof}[Proof of Proposition \ref{prop:localperiodicity}]
Lemma~\ref{lemmalocalperiodicity2} implies that for any $z \in
w\tLambda^\infty$, we have $p_z \wedge d(x) = 0$ and $[x; p_z] = w$
for all $x \in \pi(w)\Lambda^{\le \infty}$. It follows that for each
$x \in \pi(w)\Lambda^{\le \infty}$ we have $[x;(p_z, \infty)]$ in
$w\tLambda^\infty$, and $x = \pi([x;(p_z, \infty)])$. That is,
$\pi(w)\Lambda^{\le \infty} = \{\pi(z) : z \in w\tLambda^\infty\}$.
The result now follows from Lemma~\ref{lem:periodic projection}.
\end{proof}

\begin{proof}[Proof of Theorem \ref{theorem:simplicity}]
Propositions \ref{prop:cofinal} and \ref{prop:localperiodicity}
imply that $\Lambda$ is cofinal and has no local periodicity if
and only if $\tLambda$ is cofinal and has no local periodicity.
Since $\tLambda$ is row-finite and has no sources by
\cite[Theorem 3.23]{Fa}, Theorem 3.3 of \cite{RoSi} implies
that $C^*(\tLambda)$ is simple if and only if $\Lambda$ is
cofinal and has no local periodicity. Theorem 3.29 of \cite{Fa}
implies that $C^*(\Lambda)$ is a full corner of
$C^*(\tLambda)$, so by \cite[Theorem 3.19]{RaWi} $C^*(\Lambda)$
and $C^*(\tLambda)$ are Morita equivalent. In particular,
$C^*(\Lambda)$ is simple if and only if $C^*(\tLambda)$ is
simple. Bringing these implications together,
\begin{align*}
 C^*(\Lambda) \mbox{ is simple } & \iff C^*(\tLambda) \mbox{ is simple} \\
 & \iff \tLambda \mbox{ is cofinal and has no local periodicity} \\
 & \iff \Lambda \mbox{ is cofinal and has no local periodicity}
\end{align*}
as required.
\end{proof}

Before concluding this section, we pause to discuss, briefly,
the local periodicity condition presented in
Definition~\ref{dfn:nlp}. This definition is not, perhaps, the
obvious extrapolation of the condition given in \cite{RoSi} to
the locally convex setting (though certainly
Proposition~\ref{prop:localperiodicity} indicates that it is
the right one). The more obvious definition would be to say
that $\Lambda$ has local periodicity $p,q$ at $w$ if
\begin{equation}\label{eq:oldnlp}
\parbox{0.75\textwidth}{for every $x \in w\Lambda^{\le\infty}$ we have $p,q \le d(x)$
and $\sigma^p(x) = \sigma^q(x)$.}
\end{equation}
The two are not equivalent: if $\Lambda$ is the 2-graph whose
skeleton appears on the left of Figure~\ref{fig:ex}, then one can
check that the skeleton of $\tLambda$ is that which appears on the
right of Figure~\ref{fig:ex}. One can also check that $\tLambda$ has
local periodicity $(1,2), (0,2)$ at $v = \pi(v)$, but there do not
exist $p,q \in \NN^2$ satisfying~\eqref{eq:oldnlp} with $w = v$.
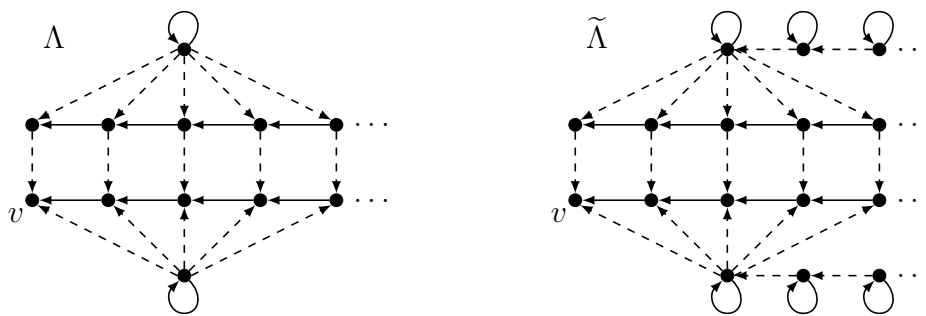
\begin{figure}[ht]
\[
\hbox to 0.35\textwidth{\hfil
\begin{tikzpicture}[scale=0.5]
    \node[anchor=south west] at (0,3.75) {$\Lambda$};
    \node[inner sep=0.5pt, circle, fill=black] (v) at (0,0) [draw] {\phantom{.}};
    \node[inner sep=1pt, anchor=north east] at (v.south west) {$v$};
    \node[inner sep=0.5pt, circle, fill=black] (1) at (2,0) [draw] {\phantom{.}};
    \node[inner sep=0.5pt, circle, fill=black] (2) at (4,0) [draw] {\phantom{.}};
    \node[inner sep=0.5pt, circle, fill=black] (3) at (6,0) [draw] {\phantom{.}};
    \node[inner sep=0.5pt, circle, fill=black] (4) at (8,0) [draw] {\phantom{.}};
    \node[inner sep=0.5em] (dots) at (9,0) {\dots};
    \node[inner sep=0.5pt, circle, fill=black] (v+) at (0,2) [draw] {\phantom{.}};
    \node[inner sep=0.5pt, circle, fill=black] (1+) at (2,2) [draw] {\phantom{.}};
    \node[inner sep=0.5pt, circle, fill=black] (2+) at (4,2) [draw] {\phantom{.}};
    \node[inner sep=0.5pt, circle, fill=black] (3+) at (6,2) [draw] {\phantom{.}};
    \node[inner sep=0.5pt, circle, fill=black] (4+) at (8,2) [draw] {\phantom{.}};
    \node[inner sep=0.5em] (dots+) at (9,2) {\dots};
    \node[inner sep=0.5pt, circle, fill=black] (w) at (4,4) [draw] {\phantom{.}};
%    \node[inner sep=3pt, anchor=south] at (w.north) {$w$};
    \node[inner sep=0.5pt, circle, fill=black] (x) at (4,-2) [draw] {\phantom{.}};
%    \node[inner sep=3pt, anchor=north] at (x.south) {$x$};
%
    \draw[style=semithick,-latex] (1.west)--(v.east);
    \draw[style=semithick,-latex] (2.west)--(1.east);
    \draw[style=semithick,-latex] (3.west)--(2.east);
    \draw[style=semithick,-latex] (4.west)--(3.east);
    \draw[style=semithick,-latex] (dots.west)--(4.east);
    \draw[style=semithick,-latex] (1+.west)--(v+.east);
    \draw[style=semithick,-latex] (2+.west)--(1+.east);
    \draw[style=semithick,-latex] (3+.west)--(2+.east);
    \draw[style=semithick,-latex] (4+.west)--(3+.east);
    \draw[style=semithick,-latex] (dots+.west)--(4+.east);
    \draw[style=semithick,dashed,-latex] (v+.south)--(v.north);
    \draw[style=semithick,dashed,-latex] (1+.south)--(1.north);
    \draw[style=semithick,dashed,-latex] (2+.south)--(2.north);
    \draw[style=semithick,dashed,-latex] (3+.south)--(3.north);
    \draw[style=semithick,dashed,-latex] (4+.south)--(4.north);
    \draw[style=semithick,-latex] (w.north east) .. controls (4.5,4.5) and (4.4,5) .. (4,5)
                                                 .. controls (3.6,5) and (3.5,4.5) .. (w.north west);
    \draw[style=semithick,-latex] (x.south east) .. controls (4.5,-2.5) and (4.4,-3) .. (4,-3)
                                                 .. controls (3.6,-3) and (3.5,-2.5) .. (x.south west);
    \draw[style=semithick,dashed,-latex] (w.west)--(v+.north east);
    \draw[style=semithick,dashed,-latex] (w.south west)--(1+.north east);
    \draw[style=semithick,dashed,-latex] (w.south)--(2+.north);
    \draw[style=semithick,dashed,-latex] (w.south east)--(3+.north west);
    \draw[style=semithick,dashed,-latex] (w.east)--(4+.north west);
    \draw[style=semithick,dashed,-latex] (x.west)--(v.south east);
    \draw[style=semithick,dashed,-latex] (x.north west)--(1.south east);
    \draw[style=semithick,dashed,-latex] (x.north)--(2.south);
    \draw[style=semithick,dashed,-latex] (x.north east)--(3.south west);
    \draw[style=semithick,dashed,-latex] (x.east)--(4.south west);
\end{tikzpicture}\hfil
}
\hskip0.1\textwidth
\hbox to 0.35\textwidth{\hfil
\begin{tikzpicture}[scale=0.5]
    \node[anchor=south west] at (0,3.75) {$\tLambda$};
    \node[inner sep=0.5pt, circle, fill=black] (v) at (0,0) [draw] {\phantom{.}};
    \node[inner sep=1pt, anchor=north east] at (v.south west) {$v$};
    \node[inner sep=0.5pt, circle, fill=black] (1) at (2,0) [draw] {\phantom{.}};
    \node[inner sep=0.5pt, circle, fill=black] (2) at (4,0) [draw] {\phantom{.}};
    \node[inner sep=0.5pt, circle, fill=black] (3) at (6,0) [draw] {\phantom{.}};
    \node[inner sep=0.5pt, circle, fill=black] (4) at (8,0) [draw] {\phantom{.}};
    \node[inner sep=0.5em] (dots) at (9,0) {\dots};
    \node[inner sep=0.5pt, circle, fill=black] (v+) at (0,2) [draw] {\phantom{.}};
    \node[inner sep=0.5pt, circle, fill=black] (1+) at (2,2) [draw] {\phantom{.}};
    \node[inner sep=0.5pt, circle, fill=black] (2+) at (4,2) [draw] {\phantom{.}};
    \node[inner sep=0.5pt, circle, fill=black] (3+) at (6,2) [draw] {\phantom{.}};
    \node[inner sep=0.5pt, circle, fill=black] (4+) at (8,2) [draw] {\phantom{.}};
    \node[inner sep=0.5em] (dots+) at (9,2) {\dots};
    \node[inner sep=0.5pt, circle, fill=black] (w) at (4,4) [draw] {\phantom{.}};
%    \node[inner sep=3pt, anchor=south] at (w.north) {$w$};
    \node[inner sep=0.5pt, circle, fill=black] (x) at (4,-2) [draw] {\phantom{.}};
%    \node[inner sep=3pt, anchor=north] at (x.south) {$x$};
%
    \draw[style=semithick,-latex] (1.west)--(v.east);
    \draw[style=semithick,-latex] (2.west)--(1.east);
    \draw[style=semithick,-latex] (3.west)--(2.east);
    \draw[style=semithick,-latex] (4.west)--(3.east);
    \draw[style=semithick,-latex] (dots.west)--(4.east);
    \draw[style=semithick,-latex] (1+.west)--(v+.east);
    \draw[style=semithick,-latex] (2+.west)--(1+.east);
    \draw[style=semithick,-latex] (3+.west)--(2+.east);
    \draw[style=semithick,-latex] (4+.west)--(3+.east);
    \draw[style=semithick,-latex] (dots+.west)--(4+.east);
    \draw[style=semithick,dashed,-latex] (v+.south)--(v.north);
    \draw[style=semithick,dashed,-latex] (1+.south)--(1.north);
    \draw[style=semithick,dashed,-latex] (2+.south)--(2.north);
    \draw[style=semithick,dashed,-latex] (3+.south)--(3.north);
    \draw[style=semithick,dashed,-latex] (4+.south)--(4.north);
    \draw[style=semithick,-latex] (w.north east) .. controls (4.5,4.5) and (4.4,5) .. (4,5)
                                                 .. controls (3.6,5) and (3.5,4.5) .. (w.north west);
    \draw[style=semithick,-latex] (x.south east) .. controls (4.5,-2.5) and (4.4,-3) .. (4,-3)
                                                 .. controls (3.6,-3) and (3.5,-2.5) .. (x.south west);
    \draw[style=semithick,dashed,-latex] (w.west)--(v+.north east);
    \draw[style=semithick,dashed,-latex] (w.south west)--(1+.north east);
    \draw[style=semithick,dashed,-latex] (w.south)--(2+.north);
    \draw[style=semithick,dashed,-latex] (w.south east)--(3+.north west);
    \draw[style=semithick,dashed,-latex] (w.east)--(4+.north west);
    \draw[style=semithick,dashed,-latex] (x.west)--(v.south east);
    \draw[style=semithick,dashed,-latex] (x.north west)--(1.south east);
    \draw[style=semithick,dashed,-latex] (x.north)--(2.south);
    \draw[style=semithick,dashed,-latex] (x.north east)--(3.south west);
    \draw[style=semithick,dashed,-latex] (x.east)--(4.south west);
%
%   Now the add-on tails
%
    \node[inner sep=0.5pt, circle, fill=black] (3-) at (6,-2) [draw] {\phantom{.}};
    \node[inner sep=0.5pt, circle, fill=black] (4-) at (8,-2) [draw] {\phantom{.}};
    \node[inner sep=0.5em] (dots-) at (9,-2) {\dots};
    \draw[style=semithick,dashed,-latex] (3-.west)--(x.east);
    \draw[style=semithick,dashed,-latex] (4-.west)--(3-.east);
    \draw[style=semithick,dashed,-latex] (dots-.west)--(4-.east);
    \draw[style=semithick,-latex] (3-.south east) .. controls (6.5,-2.5) and (6.4,-3) .. (6,-3)
                                                 .. controls (5.6,-3) and (5.5,-2.5) .. (3-.south west);
    \draw[style=semithick,-latex] (4-.south east) .. controls (8.5,-2.5) and (8.4,-3) .. (8,-3)
                                                 .. controls (7.6,-3) and (7.5,-2.5) .. (4-.south west);
    \node[inner sep=0.5pt, circle, fill=black] (3++) at (6,4) [draw] {\phantom{.}};
    \node[inner sep=0.5pt, circle, fill=black] (4++) at (8,4) [draw] {\phantom{.}};
    \node[inner sep=0.5em] (dots++) at (9,4) {\dots};
    \draw[style=semithick,dashed,-latex] (3++.west)--(w.east);
    \draw[style=semithick,dashed,-latex] (4++.west)--(3++.east);
    \draw[style=semithick,dashed,-latex] (dots++.west)--(4++.east);
    \draw[style=semithick,-latex] (3++.north east) .. controls (6.5,4.5) and (6.4,5) .. (6,5)
                                                 .. controls (5.6,5) and (5.5,4.5) .. (3++.north west);
    \draw[style=semithick,-latex] (4++.north east) .. controls (8.5,4.5) and (8.4,5) .. (8,5)
                                                 .. controls (7.6,5) and (7.5,4.5) .. (4++.north west);
\end{tikzpicture}\hfil
}
\]
\caption{Local periodicity $p,q$ at $w$ is not equivalent to Equation~\eqref{eq:oldnlp}}\label{fig:ex}
\end{figure}
That having been said, the key notion for the purposes of
characterising simplicity is that of \emph{no} local
periodicity, and the following lemma shows that
Definition~\ref{dfn:nlp} and Equation~\eqref{eq:oldnlp}
correspond to the same notion of no local periodicity.

\begin{lemma}
Let $\Lambda$ be a row-finite locally convex $k$-graph. Then
there exist $v \in \Lambda^0$ and $m \not= n \in \NN^k$ such
that $\Lambda$ has local periodicity at $v$ if and only if
there exist $w \in \Lambda^0$ and $p,q \in \NN^k$
satisfying~\eqref{eq:oldnlp}.
\end{lemma}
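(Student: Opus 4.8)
The plan is to prove the two implications separately; the reverse implication is immediate, and the forward one carries all the content. For ``\eqref{eq:oldnlp} holds at some $w$ $\Rightarrow$ Definition~\ref{dfn:nlp} holds at some $v$'', I would take $v = w$, $m = p$, $n = q$, taking $p \ne q$ as part of \eqref{eq:oldnlp} (matching the requirement $m \ne n$ in Definition~\ref{dfn:nlp}, since $p = q$ expresses no genuine periodicity). If $p, q \le d(x)$ for all $x \in w\Lambda^{\le\infty}$, then $p \wedge d(x) = p$ and $q \wedge d(x) = q$, so both conditions of Definition~\ref{dfn:nlp} collapse to $\sigma^p(x) = \sigma^q(x)$, which is exactly \eqref{eq:oldnlp}.

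For the forward direction, assume $\Lambda$ has local periodicity $m \ne n$ at $v$. Put $I := \{i : m_i \ne n_i\}$ (nonempty) and $J := \{j : m_j = n_j\}$, and define $p, q \in \NN^k$ by $p_i = m_i$, $q_i = n_i$ for $i \in I$ and $p_j = q_j = 0$ for $j \in J$, so $p \ne q$. I would first record the general fact that whenever $\Lambda$ has local periodicity $m, n$ at a vertex $u$, every $x \in u\Lambda^{\le\infty}$ satisfies $d(x)_i \ge m_i \vee n_i$ for all $i \in I$: this follows from reading the degree condition $m - m\wedge d(x) = n - n\wedge d(x)$ in coordinate $i$ and splitting into the cases $d(x)_i < m_i \wedge n_i$, $m_i \wedge n_i \le d(x)_i < m_i \vee n_i$, and $d(x)_i \ge m_i \vee n_i$, the first two of which yield contradictions. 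In particular $p, q \le d(x)$ for all $x \in v\Lambda^{\le\infty}$, so the only obstruction to \eqref{eq:oldnlp} at $v$ is the truncation occurring in the directions $j \in J$ with $m_j > 0$.

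The engine of the proof is an inheritance computation that relocates past these truncations. I claim that if $\Lambda$ has local periodicity $m, n$ at $u$, if $j_0 \in J$ satisfies $m_{j_0} = n_{j_0} \ge 1$, and if $f \in u\Lambda^{e_{j_0}}$, then $\Lambda$ has local periodicity $m - e_{j_0}, n - e_{j_0}$ at $s(f)$. To prove this I would fix $y \in s(f)\Lambda^{\le\infty}$, form the boundary path $fy \in u\Lambda^{\le\infty}$, and apply the hypothesis to $fy$. The crux is the identity
\[
m \wedge d(fy) = e_{j_0} + \bigl((m - e_{j_0}) \wedge d(y)\bigr),
\]
which in coordinate $j_0$ reads $m_{j_0} \wedge (1 + d(y)_{j_0}) = 1 + (m_{j_0} - 1) \wedge d(y)_{j_0}$ and is trivial in the other coordinates, together with its analogue for $n$. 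Since $\sigma^{e_{j_0}}(fy) = y$, these identities convert the $m,n$-periodicity of $fy$ into the $(m - e_{j_0}, n - e_{j_0})$-periodicity of $y$, and the degree condition transforms in exactly the same way.

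With these tools I would induct on $\sum_{j \in J} m_j$. When the sum is $0$ we have $m = p$ and $n = q$ at the current vertex $u$, and the structural fact gives $p, q \le d(x)$ and $\sigma^p(x) = \sigma^{m \wedge d(x)}(x) = \sigma^{n \wedge d(x)}(x) = \sigma^q(x)$ for every $x \in u\Lambda^{\le\infty}$, i.e.\ \eqref{eq:oldnlp} holds at $w := u$. When the sum is positive, pick $j_0 \in J$ with $m_{j_0} > 0$: if $u\Lambda^{e_{j_0}} = \emptyset$ then every boundary path from $u$ has $d(x)_{j_0} = 0$, so zeroing the $j_0$-coordinates of $m$ and $n$ preserves the periodicity at $u$ while lowering the sum; otherwise I choose $f \in u\Lambda^{e_{j_0}}$ and pass to $s(f)$ by the inheritance computation, again lowering the sum. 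Since the induction never alters the coordinates in $I$, the pair $(p,q)$ is constant throughout, so the base-case vertex $w$ witnesses \eqref{eq:oldnlp} for this fixed $p \ne q$. The step I expect to be the main obstacle is the inheritance identity above: verifying that prepending $f$ shifts the truncated exponent by exactly $e_{j_0}$ is where the $\wedge$-arithmetic of \eqref{minimumdistributesoversubtraction} and the boundary-path structure must be combined carefully, and it is precisely what makes the relocation to a vertex with the $J$-directions exhausted both possible and periodicity-preserving.
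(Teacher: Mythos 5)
Your proof is correct, but your forward direction takes a genuinely different route from the paper's. The paper reuses its $\tLambda$-machinery: it transfers local periodicity $m,n$ at $v$ to $\tLambda$ via Proposition~\ref{prop:localperiodicity}, uses the fact that $\tLambda$ has \emph{no sources} to choose $\lambda \in v\tLambda^{m \wedge n}$, shifts along $\lambda$ to obtain local periodicity $p := m - m \wedge n$, $q := n - m \wedge n$ (so $p \wedge q = 0$) at $s(\lambda)$, transfers back to $\Lambda$ at $\pi(s(\lambda))$ by a second application of Proposition~\ref{prop:localperiodicity}, and finally deduces $p,q \le d(x)$ from Lemma~\ref{lemmalocallyperiodic1}. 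You never leave $\Lambda$: your structural fact (the degree condition alone forces $d(x)_i \ge m_i \vee n_i$ whenever $m_i \ne n_i$) plays the role of Lemma~\ref{lemmalocallyperiodic1}, and your inheritance lemma together with the induction on $\sum_{j \in J} m_j$ --- walking along an edge $f \in u\Lambda^{e_{j_0}}$ when one exists, zeroing the $j_0$-th coordinates when $u\Lambda^{e_{j_0}} = \emptyset$ --- replaces the no-sources shift inside $\tLambda$. Your two key computations are sound: the identity $m \wedge d(fy) = e_{j_0} + \bigl((m - e_{j_0}) \wedge d(y)\bigr)$ is exactly \eqref{minimumdistributesoveraddition} with $c = e_{j_0}$, and the boundary-path facts you invoke ($fy \in u\Lambda^{\le\infty}$ and $\sigma^{e_{j_0}}(fy) = y$) are stated in the preliminaries; the invariants of your induction (coordinates in $I$ untouched, $m'_j = n'_j$ on $J$ preserved, the sum strictly decreasing) all hold. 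What the paper's route buys is brevity, since Farthing's construction and Proposition~\ref{prop:localperiodicity} have already done the heavy lifting; what yours buys is self-containedness and elementarity --- it handles sources directly (your Case A) rather than removing them, and would survive in settings where the $\tLambda$ machinery is unavailable. Note that the witnessing pairs differ: the paper's satisfies $p \wedge q = 0$, whereas yours keeps $p_i = m_i$, $q_i = n_i$ on $I$; both are legitimate, since the lemma only asserts existence. Finally, both you and the paper implicitly read $p \ne q$ into \eqref{eq:oldnlp} (without it, $p = q = 0$ satisfies \eqref{eq:oldnlp} vacuously); you flag this explicitly, which is the right reading.
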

\begin{proof}
Suppose first that $w,p,q$ satisfy~\eqref{eq:oldnlp}. Then in
particular, for $x \in w\Lambda^{\le\infty}$, we have $p - p \wedge
d(x) = 0 = q - q \wedge d(x)$, and
\[
\sigma^{p \wedge d(x)}(x)
 = \sigma^p(x)
 = \sigma^q(x)
 = \sigma^{q \wedge d(x)}(x).
\]
Hence for $m = p$, $n = q$ and $v = w$, $\Lambda$ has local
periodicity $p,q$ at $w$.

Now suppose that $\Lambda$ has local periodicity $m,n$ at $v$.
Then Proposition~\ref{prop:localperiodicity} implies that
$\tLambda$ has local periodicity $m,n$ at $v$. Let $p := m - m
\wedge n$ and $q := n - m \wedge n$, fix $\lambda \in
v\tLambda^{m \wedge n}$, and let $u := s(\lambda)$. For $x \in
u\tLambda^\infty$,
\[
\sigma^p(x) = \sigma^p(\sigma^{m \wedge n}(\lambda x)) = \sigma^m(\lambda x),
\]
and likewise $\sigma^q(x) = \sigma^n(\lambda x)$. Since
$\lambda x \in v\tLambda^\infty$, it follows that $\sigma^p(x)
= \sigma^q(x)$. So $\tLambda$ has local periodicity $p,q$ at
$u$, and another application of
Proposition~\ref{prop:localperiodicity} then shows that
$\Lambda$ has local periodicity $p,q$ at $\pi(u)$. Since $p
\wedge q = 0$, Lemma~\ref{lemmalocallyperiodic1} implies that
$p,q \le d(x)$ for all $x \in \pi(u)\Lambda^{\le\infty}$. Hence
$w := \pi(u)$, $p = m - m \wedge n$ and $q = n - m \wedge n$
satisfy~\eqref{eq:oldnlp}.
\end{proof}

\section{Ideal Structure} \label{sec:idealstructure}

In this section we describe the relationship between cofinality
and local periodicity of $\Lambda$ and the ideal structure of
$C^*(\Lambda)$. In order to state the results we need some
background. For details of the following, see \cite{KuPa}.

For $z\in\TT^k$ and $n\in\NN^k$, we use the multi-index notation
$\textstyle z^n := \prod_{i=1}^k z_i^{n_i}\in\TT$. There is a
strongly continuous action $\gamma$ of $\TT^k$ on $C^*(\Lambda)$
satisfying $\gamma_z(s_\lambda)=z^{d(\lambda)}s_\lambda$ for all
$\lambda\in\Lambda$. The fixed point algebra $C^*(\Lambda)^\gamma$ is
called the \emph{core} of $C^*(\Lambda)$ and is equal to
$\clsp\{s_\mu s_\nu^*:d(\mu)=d(\nu)\}$.

The following lemma will be useful in proving both of our main
results in this section. In the statement of the lemma and what
follows, $\Mm(A)$ denotes the multiplier algebra of a
$C^*$-algebra $A$.

\begin{lemma} \label{lemma:corner}
Let $\Lambda$ be a row-finite locally convex $k$-graph. Let
$\widetilde{I}$ be an ideal of $C^*(\tLambda)$ and let $P := \sum_{v
\in \Lambda^0} s_v \in\mathcal{M}(C^*(\tLambda))$. Then
\begin{enumerate}
 \item for $v \in \tLambda^0$, we have $s_v \in
     \widetilde{I}$ if and only if $s_{\pi(v)} \in P
     \widetilde{I} P$; and
\item $P \widetilde{I} P \cap C^*(\Lambda)^\gamma
    \not= \{0\}$ if and only if $\widetilde{I} \cap
    C^*(\tLambda)^\gamma \not= \{0\}$.
\end{enumerate}
\end{lemma}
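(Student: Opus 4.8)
The plan is to study, for the given ideal $\widetilde{I}$, the vertex set $H := \{u \in \tLambda^0 : s_u \in \widetilde{I}\}$ and how it interacts with the projection $\pi$. I would begin with two routine reductions. Since $\pi(v)\in\Lambda^0$ we have $s_{\pi(v)}\le P$, so $Ps_{\pi(v)}P = s_{\pi(v)}$; together with $P\widetilde{I}P\subseteq\widetilde{I}$ this gives $s_{\pi(v)}\in P\widetilde{I}P$ if and only if $s_{\pi(v)}\in\widetilde{I}$. Hence part~(1) reduces to the assertion that $s_v\in\widetilde{I}\iff s_{\pi(v)}\in\widetilde{I}$, i.e.\ that $H$ is $\pi$-invariant. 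I would deduce this from the fact (valid for any ideal) that $H$ is hereditary and saturated: hereditary because $s_{\so(\mu)}=s_\mu^*s_{\ra(\mu)}s_\mu$, and saturated because, as $\tLambda$ has no sources, relation~(iv) gives the finite sum $s_v=\sum_{\mu\in v\tLambda^{n}}s_\mu s_\mu^*$ for every $n$.

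Writing $v=[x;m]$, I set $w:=\pi(v)=[x;m\wedge d(x)]\in\Lambda^0$ and $t:=m-m\wedge d(x)$, and consider $\alpha:=[x;(m\wedge d(x),m)]\in w\tLambda v$, a path of degree $t$. If $s_w\in\widetilde{I}$ then $s_v=s_\alpha^*s_ws_\alpha\in\widetilde{I}$ by heredity, which is the easy inclusion. The substantive inclusion $s_v\in\widetilde{I}\Rightarrow s_w\in\widetilde{I}$ is where local convexity enters and is \emph{the main obstacle}. Here I would exploit the identity $t\wedge(d(x)-m\wedge d(x))=0$ from Lemma~\ref{lemma:pathnotation}: for each $i$ with $t_i>0$ it forces $(m\wedge d(x))_i=d(x)_i$, so the boundary-path condition on $x$ yields $w\Lambda^{e_i}=\emptyset$. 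I would then take an arbitrary $\beta\in w\tLambda^{t}$; by the last statement of Lemma~\ref{lemma:pathnotation} I may write $\beta=[z;(0,t)]$ with $z\in w\Lambda^{\le\infty}$, and since $w\Lambda^{e_i}=\emptyset$ whenever $t_i>0$, every such $z$ has $d(z)_i=0$ there, so $t\wedge d(z)=0$. Lemma~\ref{lemmalocalperiodicity2} then collapses all these paths to a single one, so all the sources $\so(\beta)=[z;t]$ coincide; choosing $z=\sigma^{m\wedge d(x)}(x)$ and applying Lemma~\ref{lemma:pathnotation} identifies this common source with $v$. Thus \emph{every} $\beta\in w\tLambda^{t}$ has source $v$, and the saturation computation $s_w=\sum_{\beta\in w\tLambda^{t}}s_\beta s_v s_\beta^*\in\widetilde{I}$ (a finite sum, since $\Lambda$ is row-finite) completes part~(1).

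For part~(2) I would first record the compatibility of the gauge actions: $P$ is fixed by the gauge action $\gamma$ on $C^*(\tLambda)$, this action restricts on the full corner $PC^*(\tLambda)P=C^*(\Lambda)$ to the gauge action on $C^*(\Lambda)$, and hence $C^*(\Lambda)^\gamma=P\,C^*(\tLambda)^\gamma\,P\subseteq C^*(\tLambda)^\gamma$. The forward implication is then immediate, since any nonzero $a\in P\widetilde{I}P\cap C^*(\Lambda)^\gamma$ already lies in $\widetilde{I}\cap C^*(\tLambda)^\gamma$. For the reverse implication I would prove the sub-claim that $\widetilde{I}\cap C^*(\tLambda)^\gamma\neq\{0\}$ forces $\widetilde{I}$ to contain a vertex projection $s_u$. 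This rests on the standard AF structure of the core $C^*(\tLambda)^\gamma=\clsp\{s_\mu s_\nu^*:\td(\mu)=\td(\nu)\}$, which is the closure of the increasing union of the block algebras $\mathcal{M}_n=\clsp\{s_\mu s_\nu^*:\mu,\nu\in\tLambda^n,\ \so(\mu)=\so(\nu)\}\cong\bigoplus_{u}\mathcal{K}(\ell^2(\tLambda^n u))$: a nonzero positive element of the ideal $\widetilde{I}\cap C^*(\tLambda)^\gamma$ can be approximated within some $\mathcal{M}_n$ and compressed, via functional calculus, onto a matrix unit $s_\mu s_\mu^*$ lying in the ideal, whence $s_{\so(\mu)}=s_\mu^*(s_\mu s_\mu^*)s_\mu\in\widetilde{I}$. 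Setting $u:=\so(\mu)$ and invoking part~(1) then produces $0\neq s_{\pi(u)}\in P\widetilde{I}P\cap C^*(\Lambda)^\gamma$, which finishes the proof.
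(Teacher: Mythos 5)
Your proposal is correct and takes essentially the same route as the paper's proof: part~(1) hinges on the identical key fact that $\pi(v)\tLambda^{t}$ is a singleton $\{\alpha\}$ (obtained, as in the paper, from Lemmas \ref{lemma:pathnotation} and \ref{lemmalocalperiodicity2}), yielding $s_v = s_\alpha^* s_{\pi(v)} s_\alpha$ and $s_{\pi(v)} = s_\alpha s_v s_\alpha^*$, and part~(2) uses the same ``a nonzero intersection with the core yields a vertex projection in $\widetilde{I}$'' step followed by an application of part~(1). The only difference is cosmetic: where the paper cites the argument of (ii)$\implies$(iii) of \cite[Proposition~3.4]{RoSi} for that step, you re-derive it via the standard AF-core compression argument, which is precisely the content of the cited result.
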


\begin{proof}
For~(1), fix $v \in \tLambda^0$, and use
Lemma~\ref{lemma:pathnotation} to write $v = [x; p]$ where $x \in
\pi(v)\Lambda^{\le\infty}$ and $p \wedge d(x) = 0$. Let $\lambda :=
[x; (0, p)] \in \pi(v)\tLambda v$. Lemma~\ref{lemmalocalperiodicity2}
implies that $\pi(v)\tLambda^p = \{\lambda\}$. Hence the
Cuntz-Krieger relations show that
\[
s_v = s^*_\lambda s_{\pi(v)} s_\lambda
\quad\text{ and }\quad
s_{\pi(v)} = s_\lambda s_{v} s^*_\lambda.
\]
Since $P s_{\pi(v)} P = s_{\pi(v)}$, this proves~(a).

For~(2), the ``only if'' implication is trivial, so it suffices to
establish the ``if" direction. Suppose that $\widetilde{I} \cap
C^*(\tLambda)^\gamma \not= \{0\}$. The argument of
(ii)$\implies$(iii) in \cite[Proposition~3.4]{RoSi} shows that there
exists $w \in \tLambda^0$ such that $s_w \in \widetilde{I}$.
Hence~(a) implies that there exists $v \in \Lambda^0$ such that $s_v
\in P\widetilde{I}P$, and since the vertex projections are fixed by
$\gamma$, we then have  $s_v \in P\widetilde{I}P \cap
C^*(\Lambda)^\gamma$.
\end{proof}

\begin{prop} \label{prop:cofinalequiv}
Let $\Lambda$ be a row-finite locally convex $k$-graph. The following are equivalent.
\begin{itemize}
\item[(1)] $\Lambda$ is cofinal
\item[(2)] If $I$ is an ideal of $C^*(\Lambda)$ and $s_v\in
    I$ for some $v\in\Lambda^0$, then $I=C^*(\Lambda)$.
\item[(3)] If $I$ is an ideal of $C^*(\Lambda)$ and $I\cap
    C^*(\Lambda)^\gamma\neq\{0\}$, then $I=C^*(\Lambda)$.
\end{itemize}
\end{prop}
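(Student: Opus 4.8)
The plan is to push all three conditions across the full corner embedding $C^*(\Lambda) = P C^*(\tLambda) P$ and then invoke the corresponding equivalences for the sourceless graph $\tLambda$ established in \cite{RoSi}. Throughout I would write $A := C^*(\tLambda)$, $B := C^*(\Lambda)$, and $P := \sum_{v\in\Lambda^0} s_v$, which is a full projection in $\Mm(A)$ with $PAP = B$ by Theorem~3.29 of \cite{Fa}. Because $P$ is full, the ideal lattice of $A$ is carried onto the ideal lattice of $B$ by the mutually inverse maps $\widetilde I \mapsto P\widetilde I P = \widetilde I \cap B$ and $I \mapsto \overline{AIA}$ (this is the Rieffel correspondence underlying the Morita equivalence of \cite[Theorem~3.19]{RaWi}). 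The crucial feature I would extract is that this bijection sends the top element to the top element: for an ideal $\widetilde I$ of $A$ we have $P\widetilde I P = B$ if and only if $\widetilde I = A$. This is the device that lets me replace the conclusion ``$I = C^*(\Lambda)$'' by ``$\widetilde I = C^*(\tLambda)$'' in the statements of (2) and (3).

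Next I would match the \emph{hypotheses} of (2) and (3) with their $\tLambda$-counterparts, using Lemma~\ref{lemma:corner}. Given an ideal $I$ of $B$, set $\widetilde I := \overline{AIA}$, so that $P\widetilde I P = I$. For~(2), Lemma~\ref{lemma:corner}(1) gives, for each $w\in\tLambda^0$, that $s_w\in\widetilde I$ if and only if $s_{\pi(w)}\in P\widetilde I P = I$; since $\pi$ maps $\tLambda^0$ onto $\Lambda^0$, the statement ``$\widetilde I$ contains some vertex projection $s_w$'' is equivalent to ``$I$ contains $s_v$ for some $v\in\Lambda^0$''. For~(3), Lemma~\ref{lemma:corner}(2) directly yields $I\cap C^*(\Lambda)^\gamma\neq\{0\}$ if and only if $\widetilde I\cap C^*(\tLambda)^\gamma\neq\{0\}$. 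Combining each equivalence with the top-element property of the previous paragraph shows that condition~(2) (respectively~(3)) holds for $\Lambda$ if and only if the analogous vertex-projection (respectively core-intersection) condition holds for every ideal of $C^*(\tLambda)$.

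Finally I would invoke the sourceless case: since $\tLambda$ is row-finite with no sources by \cite[Theorem~3.23]{Fa}, \cite[Proposition~3.4]{RoSi} shows that cofinality of $\tLambda$ is equivalent to both the vertex-projection condition and the core-intersection condition on ideals of $C^*(\tLambda)$. Proposition~\ref{prop:cofinal} then converts cofinality of $\tLambda$ into cofinality of $\Lambda$, closing the chain $(1) \iff (2) \iff (3)$. The step I expect to require the most care is the ideal correspondence itself --- verifying that $\widetilde I \mapsto P\widetilde I P$ is a genuine bijection sending proper ideals to proper ideals and $A$ to $B$ --- which rests squarely on fullness of $P$; once this is in place the remainder is bookkeeping through Lemma~\ref{lemma:corner} and \cite{RoSi}. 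A secondary subtlety, already built into Lemma~\ref{lemma:corner}(1), is ensuring the existential quantifier over vertices transfers correctly, which is exactly where surjectivity of $\pi$ onto $\Lambda^0$ is used.
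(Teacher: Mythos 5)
Your proposal is correct and takes essentially the same route as the paper's proof: transfer all three conditions to $\tLambda$ via the full-corner ideal bijection $\widetilde{I} \mapsto P\widetilde{I}P$ together with Lemma~\ref{lemma:corner}, invoke the sourceless-case equivalences from \cite{RoSi} (the paper cites Proposition~3.5 there rather than 3.4, which is the aperiodicity counterpart), and convert cofinality of $\tLambda$ back to cofinality of $\Lambda$ via Proposition~\ref{prop:cofinal}. The only difference is that you spell out the Rieffel-correspondence bookkeeping (inverse map $I \mapsto \overline{AIA}$, preservation of the top element, surjectivity of $\pi$ on vertices) that the paper leaves implicit.
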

\begin{proof}
That these three statements are equivalent for
$\widetilde{\Lambda}$ follows from \cite[Proposition~3.5]{RoSi}
because $\widetilde{\Lambda}$ has no sources.
Proposition~\ref{prop:cofinal} shows that $\widetilde{\Lambda}$
is cofinal if and only if $\Lambda$ is cofinal. Recall that
$\sum_{v \in \Lambda^0} s_v$ converges to a full projection $P
\in \Mm C^*(\widetilde{\Lambda})$ and that $P C^*(\tLambda) P$
is canonically isomorphic to $C^*(\Lambda)$. Hence the map
$\widetilde{I} \mapsto P \widetilde{I} P$ is a bijection
between ideals of $C^*(\tLambda)$ and ideals of $\Lambda$. The
proposition then follows from Lemma \ref{lemma:corner}.
\end{proof}

In order to state the next result we need more background. As
in \cite[Theorem 3.15]{RaSiYe} let
$\mathcal{H}:=\ell^2(\Lambda^{\leq\infty})$ with standard basis
denoted $\{u_x:x\in\Lambda^{\leq\infty}\}$. For each
$\lambda\in\Lambda$ define $S_\lambda\in
\mathcal{B}(\mathcal{H})$ by
\[
 S_\lambda u_x = \left\{
 \begin{array}{ll} u_{\lambda x} & \mbox{if } s(\lambda)=r(x) \\
  0 & \mbox{otherwise.}
 \end{array} \right.
\]
Then $\{S_\lambda:\lambda\in\Lambda\}\subset \mathcal{H}$ is a
Cuntz-Krieger $\Lambda$-family. By the universal property of
$C^*(\Lambda)$ there exists a homomorphism $\pi_S:C^*(\Lambda)\to
\mathcal{B}(\mathcal{H})$ such that $\pi_S(s_\lambda)=S_\lambda$ for
all $\lambda\in\Lambda$. We call $\pi_S$ the \emph{boundary-path
representation}.

\begin{prop} \label{prop:localperiodicityequiv}
Let $\Lambda$ be a row-finite locally convex $k$-graph. The following are equivalent:
\begin{itemize}
 \item[(1)] $\Lambda$ has no local periodicity.
 \item[(2)] Every non-zero ideal of $C^*(\Lambda)$ contains a vertex projection.
 \item[(3)] The boundary-path representation $\pi_S$ is faithful
\end{itemize}
\end{prop}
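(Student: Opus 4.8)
The plan is to leverage the corresponding result for $\tLambda$ from \cite{RoSi}, transporting it across the Morita equivalence via the full corner $P C^*(\tLambda) P \cong C^*(\Lambda)$, exactly as was done for Proposition~\ref{prop:cofinalequiv}. Since $\tLambda$ is row-finite with no sources, \cite[Proposition~3.4]{RoSi} (or the analogous result there) gives that the three conditions for $\tLambda$ are equivalent: $\tLambda$ has no local periodicity; every non-zero ideal of $C^*(\tLambda)$ contains a vertex projection; and the (infinite-path) boundary-path representation of $C^*(\tLambda)$ is faithful. Proposition~\ref{prop:localperiodicity} already tells us that $\Lambda$ has no local periodicity if and only if $\tLambda$ does, so condition~(1) transfers immediately. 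The work is therefore in matching conditions~(2) and~(3) for $\Lambda$ with the corresponding statements for $\tLambda$.

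For the implication structure I would argue (1)$\Rightarrow$(2)$\Rightarrow$(3)$\Rightarrow$(1), or else prove each of (2) and (3) equivalent to its $\tLambda$-counterpart directly. For (2): recall that $\widetilde I \mapsto P\widetilde I P$ is a bijection between ideals of $C^*(\tLambda)$ and ideals of $C^*(\Lambda)$. By Lemma~\ref{lemma:corner}(1), for $v \in \tLambda^0$ we have $s_v \in \widetilde I$ if and only if $s_{\pi(v)} \in P\widetilde I P$; moreover every vertex of $\Lambda$ is $\pi(v)$ for some $v \in \tLambda^0$ (indeed $\pi$ restricts to a surjection $\tLambda^0 \to \Lambda^0$, taking $v = [x;0]$). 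Hence a non-zero ideal $I = P\widetilde I P$ of $C^*(\Lambda)$ contains a vertex projection $s_w$ of $\Lambda$ if and only if the corresponding ideal $\widetilde I$ contains a vertex projection $s_v$ of $\tLambda$. This shows condition~(2) for $\Lambda$ is equivalent to the vertex-projection condition for $\tLambda$, which in turn is equivalent to no local periodicity of $\tLambda$, hence to~(1).

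For (3) I would identify the boundary-path representation $\pi_S$ of $C^*(\Lambda)$ with (a corner of) the infinite-path representation of $C^*(\tLambda)$. Using Proposition~\ref{prop:infinitepaths}, the map $y \mapsto \pi(y)$ furnishes a bijection $\tLambda^\infty \to \Lambda^{\le\infty}$, which induces a unitary $\ell^2(\tLambda^\infty) \to \ell^2(\Lambda^{\le\infty})$; I expect that under this unitary the operators $S_{\iota(\lambda)}$ on $\ell^2(\tLambda^\infty)$ restrict, on the range of $P$, to the $S_\lambda$ defining $\pi_S$. Granting this, $\pi_S$ is faithful on $C^*(\Lambda) \cong P C^*(\tLambda) P$ if and only if the infinite-path representation of $C^*(\tLambda)$ is faithful on the full corner, and since $P$ is full, faithfulness on the corner is equivalent to faithfulness on all of $C^*(\tLambda)$, which by \cite{RoSi} is equivalent to no local periodicity of $\tLambda$, hence to~(1).

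The main obstacle, I expect, is the bookkeeping in step~(3): one must verify carefully that the boundary-path family $\{S_\lambda\}$ on $\ell^2(\Lambda^{\le\infty})$ is intertwined with the restriction to $P\ell^2(\tLambda^\infty)$ of the infinite-path family $\{S_{\iota(\mu)}\}$, keeping track of how $\pi$ interacts with the range map $r$ and with the left-concatenation $\lambda \cdot x$ via the final statement $\pi(y(m,n)) = \pi(y)(m \wedge d(\pi(y)), n \wedge d(\pi(y)))$ of Proposition~\ref{prop:infinitepaths}. The subtle point is that a boundary path $x$ with $d(x)_i$ finite corresponds to infinite paths $y$ whose $i$-th coordinate degree is genuine $\infty$ only because $\pi(y)$ has been ``padded'', so the identification of basis vectors must respect the equivalences of Lemma~\ref{lemmalocalperiodicity2}; once this identification is pinned down, the intertwining and the fullness argument are routine, and the equivalence with~(1) and~(2) follows from the already-established $\tLambda$ results.
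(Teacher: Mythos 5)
Your overall architecture --- transport all three conditions across the full corner $P C^*(\tLambda) P \cong C^*(\Lambda)$ and invoke the three-way equivalence of \cite{RoSi} for the sourceless graph $\tLambda$ --- is viable, and your handling of conditions (1) and (2) is correct (and for (2) is essentially what the paper does). But the pivotal claim in your step (3) is false as stated: $y \mapsto \pi(y)$ is \emph{not} a bijection from $\tLambda^\infty$ to $\Lambda^{\le\infty}$. By the uniqueness part of Proposition~\ref{prop:infinitepaths}, the fibre of $\pi$ over $x \in \Lambda^{\le\infty}$ is $\{[x;(p,\infty)] : p \wedge d(x) = 0\}$, which is infinite whenever $d(x)_i = 0$ for some $i$ --- that is, precisely when $\Lambda$ has sources, the situation this paper is about. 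Consequently there is no unitary $\ell^2(\tLambda^\infty) \to \ell^2(\Lambda^{\le\infty})$ sending $u_y$ to $u_{\pi(y)}$. The repair is the one your final paragraph gropes toward: $r(y) \in \iota(\Lambda^0)$ if and only if $p_y = 0$, so the basis vectors of $P\ell^2(\tLambda^\infty)$ are indexed exactly by $\{[x;(0,\infty)] : x \in \Lambda^{\le\infty}\}$, and $\pi$ restricted to this set \emph{is} a bijection onto $\Lambda^{\le\infty}$. The unitary must therefore be defined from $P\ell^2(\tLambda^\infty)$, not from $\ell^2(\tLambda^\infty)$, onto $\ell^2(\Lambda^{\le\infty})$. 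With that correction the intertwining $U S_{\iota(\lambda)}\big|_{P\ell^2(\tLambda^\infty)} = S_\lambda U$ follows from the composition formula (one checks $\iota(\lambda)\,\tcirc\,[x;(0,q)] = [\lambda x;(0,d(\lambda)+q)]$), and your fullness argument --- faithfulness on the corner is equivalent to faithfulness on $C^*(\tLambda)$ because $\widetilde{I} \mapsto P\widetilde{I}P$ is injective on ideals --- goes through.

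Once repaired, your proof is correct but genuinely different from the paper's. The paper proves (1)$\Rightarrow$(2) exactly as you do, but then closes the cycle inside $\Lambda$ itself: (2)$\Rightarrow$(3) is the one-line observation that $\pi_S(s_v) = S_v \neq 0$ because $v\Lambda^{\le\infty} \neq \emptyset$ by local convexity, so $\ker(\pi_S)$ contains no vertex projection; and (3)$\Rightarrow$(1) is by contrapositive, using the technical Lemma~\ref{lemma:technical1} to produce, from local periodicity $m,n$ at $v$, paths with $d(\mu)\neq d(\nu)$ and $\mu\alpha y = \nu\alpha y$ for all $y \in s(\alpha)\Lambda^{\le\infty}$, whence $a = s_{\mu\alpha}s_{\mu\alpha}^* - s_{\nu\alpha}s_{\nu\alpha}^*$ is a nonzero element of $\ker(\pi_S)$. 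Thus the paper never needs to identify $\pi_S$ with a corner of the infinite-path representation of $C^*(\tLambda)$ --- the bookkeeping you yourself flagged as the main obstacle --- at the cost of proving Lemma~\ref{lemma:technical1} (which it reuses anyway in the subsequent proposition on gauge-invariant ideals). Your route avoids that path-combinatorial lemma entirely, but only after the corner restriction of your unitary is set up correctly.
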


In order to prove this proposition we need a technical lemma.

\begin{lemma} \label{lemma:technical1}
Let $\Lambda$ be a row-finite locally convex $k$-graph and suppose
that $\Lambda$ has local periodicity $m,n$ at $v$. Fix $x \in
v\Lambda^{\le \infty}$, let $\mu := x(0, m \wedge d(x))$, let $\alpha
:= x(m \wedge d(x), (m \vee n) \wedge d(x))$, and let $\nu := x(0, n
\wedge d(x))$. Then $d(\mu) \not = d(\nu)$, and $\mu\alpha
y=\nu\alpha y$ for all $y\in s(\alpha)\Lambda^{\leq\infty}$.
\end{lemma}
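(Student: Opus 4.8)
The first claim, $d(\mu)\ne d(\nu)$, I would dispose of immediately: since $d(\mu)=m\wedge d(x)$ and $d(\nu)=n\wedge d(x)$, were these equal then the identity $m-m\wedge d(x)=n-n\wedge d(x)$ guaranteed by local periodicity $m,n$ at $v$ would collapse to $m=n$, contradicting $m\ne n$.

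For the equality $\mu\alpha y=\nu\alpha y$, the plan is to fix $y\in s(\alpha)\Lambda^{\le\infty}$ and to apply the local periodicity hypothesis not to $x$ but to the boundary path $\xi:=\mu\alpha y$. Since $\mu\alpha=x(0,(m\vee n)\wedge d(x))\in v\Lambda$, we have $\xi=(\mu\alpha)y\in v\Lambda^{\le\infty}$, so local periodicity $m,n$ at $v$ yields $\sigma^{m\wedge d(\xi)}(\xi)=\sigma^{n\wedge d(\xi)}(\xi)$. The strategy is then to recognise the left shift as $\alpha y$ and, using the factorisation of $\xi$ through its initial segment of degree $n\wedge d(\xi)$, to reassemble the right shift with $\nu$; because the two shifts coincide, this will give $\mu\alpha y=\xi=\nu\alpha y$.

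The crux --- and the step I expect to be the main obstacle --- is the degree identity $m\wedge d(\xi)=m\wedge d(x)$ (and its analogue with $n$), since it is exactly what lets me read off $\xi(0,m\wedge d(\xi))=\mu$, $\sigma^{m\wedge d(\xi)}(\xi)=\alpha y$, and $\xi(0,n\wedge d(\xi))=\nu$. I would verify it coordinatewise. In a coordinate $i$ with $m_i\ne n_i$, applying the first local periodicity condition to $x$ and to $\xi$ separately forces $m_i,n_i\le d(x)_i$ and $m_i,n_i\le d(\xi)_i$ (otherwise the equality $\max(0,m_i-d(\cdot)_i)=\max(0,n_i-d(\cdot)_i)$ would fail), whence $(m\wedge d(\xi))_i=m_i=(m\wedge d(x))_i$. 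In a coordinate $i$ with $m_i=n_i$ one has $(m\vee n)_i=m_i$ and hence $d(\xi)_i=\min(m_i,d(x)_i)+d(y)_i$; if $m_i\le d(x)_i$ the identity is immediate, while if $m_i>d(x)_i$ the vertex $s(\alpha)=x((m\vee n)\wedge d(x))$ sits at the maximal $i$-th coordinate $d(x)_i$ of the boundary path $x$, so $s(\alpha)\Lambda^{e_i}=\emptyset$, forcing $d(y)_i=0$ and again $(m\wedge d(\xi))_i=d(x)_i=(m\wedge d(x))_i$. The argument for $n$ is identical.

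With the degree identity in hand the assembly is routine. Both $\xi(0,m\wedge d(\xi))=\mu$ and $\xi(0,n\wedge d(\xi))=\nu$ follow because these initial segments lie within $\mu\alpha=x(0,(m\vee n)\wedge d(x))$; the shift-of-concatenation identity $\sigma^{d(\mu)}(\mu\alpha y)=\alpha y$ identifies $\sigma^{m\wedge d(\xi)}(\xi)=\alpha y$; and the factorisation $\xi=\xi(0,n\wedge d(\xi))\,\sigma^{n\wedge d(\xi)}(\xi)$ together with $\sigma^{m\wedge d(\xi)}(\xi)=\sigma^{n\wedge d(\xi)}(\xi)$ gives $\nu\alpha y=\nu\,\sigma^{n\wedge d(\xi)}(\xi)=\xi=\mu\alpha y$. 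As a bonus, this factorisation automatically certifies that $\nu\alpha$ is genuinely composable (that is, $s(\nu)=r(\alpha)$), which is not obvious beforehand.
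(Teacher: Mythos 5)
Your proposal is correct and follows essentially the same route as the paper's proof: both apply the local periodicity hypothesis to $z=\mu\alpha y$, establish the key degree identity $m\wedge d(z)=m\wedge d(x)$ (and its analogue for $n$) by a coordinatewise argument using the boundary-path condition at $s(\alpha)$, and then assemble $\mu\alpha y = z = \nu\,\sigma^{n\wedge d(z)}(z)=\nu\alpha y$. The only cosmetic difference is your initial case split on $m_i\neq n_i$ versus $m_i=n_i$ (invoking local periodicity of both $x$ and $z$), where the paper splits directly on $m_i\leq d(x)_i$ versus $m_i>d(x)_i$; both verifications are sound.
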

\begin{proof}
Since $x\in v\Lambda^{\leq\infty}$ we have $\sigma^{m \wedge
d(x)}(x)=\sigma^{n\wedge d(x)}(x)$ and $m - m \wedge d(x) = n - n
\wedge d(x)$. Since $m \not=n$ and $m - m \wedge d(x) = n - n \wedge
d(x)$, we immediately have $d(\mu) \not = d(\nu)$.

Fix $y\in s(\alpha)\Lambda^{\leq\infty}$; we must show\ that
$\mu\alpha y=\nu\alpha y$. Let $z:=\mu\alpha y$. Then $z \in
v\Lambda^{\le\infty}$, and hence $\sigma^{m \wedge d(z)}(z) =
\sigma^{n \wedge d(z)}(z)$.

We claim that $m \wedge d(z) = m \wedge d(x)$. To see this, fix $1
\le i \le k$. If $m_i \le d(x)_i$, then $d(z)_i \ge d(\mu)_i = m_i$,
so we also have $m_i \le d(z_i)$, and $(m \wedge d(x))_i = m_i = (m
\wedge d(z))_i$. If $m_i > d(x)_i$,  then $x \in \Lambda^{\le
\infty}$ forces $s(\mu)\Lambda^{e_i} = \emptyset$. Hence $d(z)_i =
d(\mu)_i =  d(x)_i$, and $(m \wedge d(x))_i = d(x)_i = d(z)_i = (m
\wedge d(z))_i$. This establishes the claim. A similar argument shows
that $n \wedge d(z) = n \wedge d(x)$.

We now have
\begin{equation}\label{eq:alphay=sigma(z)}
\alpha y
 = \sigma^{d(\mu)}(z)
 = \sigma^{m \wedge d(x)}(z)
 = \sigma^{m \wedge d(z)}(z)
 = \sigma^{n \wedge d(z)}(z)
 = \sigma^{n \wedge d(x)}(z).
\end{equation}
We now calculate coordinate-wise to see that $(m \vee n) \wedge d(x)
= (m \wedge d(x)) \vee (n \wedge d(x))$. In particular, $d(\mu\alpha)
\ge d(\nu)$, so $\nu=(\mu\alpha)(0,d(\nu))$. By definition of $\nu$,
we have $z=\nu\sigma^{n \wedge d(x)}(z)$.  So $\mu\alpha y = z =
\nu\sigma^{n \wedge d(x)}(z)$, and this is equal to $\nu\alpha y$
by~\eqref{eq:alphay=sigma(z)}.
\end{proof}

\begin{proof}[Proof of Proposition \ref{prop:localperiodicityequiv}]
($(1)\Longrightarrow(2)$) Suppose $\Lambda$ has no local periodicity
and let $I$ be an ideal in $C^*(\Lambda)$. Then $I=P\widetilde{I}P$
for some ideal $\widetilde{I}$ of $C^*(\tLambda)$. By
Proposition~\ref{prop:localperiodicity} $\tLambda$ has no local
periodicity so \cite[Proposition 3.6]{RoSi} implies that
$\widetilde{I}$ contains a vertex projection $s_{[x;m]}$. Lemma
\ref{lemma:corner}(1) then implies that $I=P\widetilde{I}P$ also
contains a vertex projection.

($(2)\Longrightarrow(3)$) For $v\in\Lambda^0$, $\pi_S(s_v)=S_v$
is the projection onto $\clsp\{u_x:x\in\Lambda^{\leq\infty}\}$
and so is non-zero. So $\ker(\pi_S)$ contains no vertex
projection and is trivial by (2).

($(3)\Longrightarrow(1)$) The proof of this implication runs almost
exactly the same as in the proof of \cite[Proposition 3.6]{RoSi}, but
we substitute Lemma \ref{lemma:technical1} for \cite[Lemma
3.4]{RoSi}. The broad strategy is as follows: we argue by
contrapositive, supposing $\Lambda$ has local periodicity $m,n$ at
$v$. Lemma~\ref{lemma:technical1} implies that there exist distinct
elements $m', n'$ of $\NN^k$ and paths $\mu\in v\Lambda^{m'}, \nu\in
v\Lambda^{n'} s(\mu)$ and $\alpha\in s(\mu)\Lambda$ such that
$\mu\alpha y =\nu\alpha y$ for all $y\in
s(\alpha)\Lambda^{\leq\infty}$. Let
\[
 a:=s_{\mu\alpha}s_{\mu\alpha}^*-s_{\nu\alpha}s_{\nu\alpha}^*
\]
We use the gauge action to prove that $a\neq 0$, and show
directly that $a\in \ker(\pi_S)$, so the boundary path
representation is not faithful.
\end{proof}

Before we state the next result we need to recall some
terminology from \cite[Section 5]{RaSiYe}.  We say a subset
$H\subseteq \Lambda^0$ is \emph{hereditary} if $r(\lambda)\in
H$ implies $s(\lambda)\in H$ for all $\lambda\in\Lambda$. We
say $H$ is \emph{saturated} if
\[
 \{s(\lambda):\lambda\in v\Lambda^{\leq e_i}\}\subseteq H
    \text{ for some } i\in\{1,\dots,k\} \text{ implies } v\in H.
\]
If $H\subset\Lambda^0$ is saturated and hereditary, then
$\Lambda\setminus\Lambda H$ is a sub-$k$-graph of $\Lambda$.

\begin{prop}
Let $\Lambda$ be a row-finite locally convex $k$-graph. Then
the following are equivalent.
\begin{itemize}
\item[(1)] Every ideal of $C^*(\Lambda)$ is
    gauge-invariant.
\item[(2)] For every saturated hereditary subset $H\subset
    \Lambda^0$, $\Lambda\setminus\Lambda H$ has no local
    periodicity.
\end{itemize}
\end{prop}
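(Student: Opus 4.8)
The plan is to exploit the bijection, established in \cite[Section~5]{RaSiYe}, between the gauge-invariant ideals of $C^*(\Lambda)$ and the saturated hereditary subsets $H\subseteq\Lambda^0$: each such $H$ determines a gauge-invariant ideal $I_H$ with $C^*(\Lambda)/I_H\cong C^*(\Lambda\setminus\Lambda H)$, and the associated quotient map $q$ intertwines the gauge actions. Two consequences of this correspondence will do most of the work. First, a non-zero gauge-invariant ideal always contains a vertex projection (its saturated hereditary set $H$ is non-empty and $s_v\in I_H$ for $v\in H$); equivalently, a non-zero ideal that contains \emph{no} vertex projection cannot be gauge-invariant. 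Second, Proposition~\ref{prop:localperiodicityequiv}, applied to the row-finite locally convex $k$-graph $\Lambda\setminus\Lambda H$, says that $\Lambda\setminus\Lambda H$ has no local periodicity if and only if every non-zero ideal of $C^*(\Lambda\setminus\Lambda H)$ contains a vertex projection. Both implications will then reduce to these facts.

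For $(1)\Rightarrow(2)$ I would argue the contrapositive. Suppose some $\Lambda\setminus\Lambda H$ has local periodicity $m,n$ at some vertex. Then Proposition~\ref{prop:localperiodicityequiv} produces a non-zero ideal $\bar J$ of $C^*(\Lambda\setminus\Lambda H)$ containing no vertex projection, and by the first consequence above $\bar J$ is not gauge-invariant. Pulling $\bar J$ back through $q\colon C^*(\Lambda)\to C^*(\Lambda)/I_H\cong C^*(\Lambda\setminus\Lambda H)$ gives an ideal $J:=q^{-1}(\bar J)$ of $C^*(\Lambda)$; since $q$ is equivariant and $\bar J$ is not gauge-invariant, $J$ is not gauge-invariant, so $(1)$ fails.

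For $(2)\Rightarrow(1)$ I would take an arbitrary ideal $I$ of $C^*(\Lambda)$ and let $I_\gamma$ be the largest gauge-invariant ideal contained in $I$ (this exists as the sum of all gauge-invariant ideals inside $I$, a sum that is again gauge-invariant because $\gamma$ acts by automorphisms). By the bijection, $I_\gamma=I_H$ for some saturated hereditary $H$. Passing to the quotient $q\colon C^*(\Lambda)\to C^*(\Lambda)/I_H\cong C^*(\Lambda\setminus\Lambda H)$, the image $q(I)$ is an ideal of $C^*(\Lambda\setminus\Lambda H)$, and the crucial claim is that it contains no vertex projection. Indeed, if $s_w\in q(I)$ for some vertex $w$ of $\Lambda\setminus\Lambda H$ (equivalently $w\in\Lambda^0\setminus H$), then the ideal generated by $s_w$ in the quotient is non-zero and gauge-invariant, and its preimage under $q$ is a gauge-invariant ideal lying strictly above $I_H$ (it contains $w\notin H$) yet inside $q^{-1}(q(I))=I$ (using $\ker q=I_H\subseteq I$), contradicting the maximality of $I_\gamma$. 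Since $\Lambda\setminus\Lambda H$ has no local periodicity by hypothesis, Proposition~\ref{prop:localperiodicityequiv} forces $q(I)=0$, so $I=I_H=I_\gamma$ is gauge-invariant.

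I expect the main obstacle to be the $(2)\Rightarrow(1)$ direction, and within it the verification that $q(I)$ contains no vertex projection; this is the step where the maximality of $I_\gamma$ and the behaviour of gauge-invariant ideals under the equivariant quotient map must be combined carefully. The only other point demanding care is the precise invocation of the ideal-theoretic input from \cite{RaSiYe} --- the parametrisation of gauge-invariant ideals by saturated hereditary sets and the identification $C^*(\Lambda)/I_H\cong C^*(\Lambda\setminus\Lambda H)$ --- after which both implications are immediate applications of Proposition~\ref{prop:localperiodicityequiv}.
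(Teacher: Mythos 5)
Your proof is correct, and it rests on the same two pillars as the paper's: the correspondence $H\mapsto I_H$ of \cite[Theorem~5.2]{RaSiYe}, together with the identification $C^*(\Lambda)/I_H\cong C^*(\Lambda\setminus\Lambda H)$, and Proposition~\ref{prop:localperiodicityequiv} applied to the row-finite locally convex $k$-graph $\Lambda\setminus\Lambda H$. The execution differs from the paper's in both directions, in ways worth recording. For $(2)\Rightarrow(1)$ the paper simply defines $H:=\{v\in\Lambda^0:s_v\in I\}$ (saturated and hereditary by \cite[Theorem~5.2]{RaSiYe}); with that choice your ``crucial claim'' --- that no vertex projection of $C^*(\Lambda\setminus\Lambda H)$ dies in $C^*(\Lambda)/I$ --- is immediate from the definition of $H$, and no maximality argument is needed. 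Your route through the largest gauge-invariant subideal $I_\gamma$ reaches the same ideal (one checks $I_\gamma = I_{\{v:s_v\in I\}}$ using that the correspondence is an order isomorphism), at the cost of the extra step pulling back the ideal generated by $s_w$; that step is valid, since the quotient map is equivariant and vertex projections are fixed by the gauge action. For $(1)\Rightarrow(2)$ you invoke Proposition~\ref{prop:localperiodicityequiv} as a black box to produce a non-zero ideal $\bar J$ of $C^*(\Lambda\setminus\Lambda H)$ containing no vertex projection, observe that non-zero gauge-invariant ideals must contain a vertex projection (surjectivity of $H\mapsto I_H$), and pull $\bar J$ back through the equivariant quotient. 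The paper instead re-runs the explicit construction behind that proposition (via Lemma~\ref{lemma:technical1}) to produce the element $b=s_{\mu\alpha}s_{\mu\alpha}^*-s_{\nu\alpha}s_{\nu\alpha}^*$ and the ideal $J$ given by the kernel of the composition of $q_{I_H}$ with the boundary-path representation of $C^*(\Lambda\setminus\Lambda H)$, and deduces non-gauge-invariance from the \emph{injectivity} of $H\mapsto I_H$: $J$ and $I_H$ contain exactly the same vertex projections, yet $b\in J\setminus I_H$. Your version is shorter and more modular, avoiding any repetition of the technical construction; the paper's version exhibits a concrete non-gauge-invariant ideal and a concrete element witnessing $J\neq I_H$. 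Both arguments are sound, and every external input you flag (equivariance of the quotient, the bijectivity of the correspondence, non-vanishing of vertex projections in the quotient) is indeed supplied by \cite[Theorem~5.2]{RaSiYe}.
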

\begin{proof}
($(2)\Longrightarrow(1)$) Suppose $\Lambda\setminus\Lambda H$ has no
local periodicity for each saturated hereditary subset $H$ of
$\Lambda^0$. Fix a non-zero ideal $I$ of $C^*(\Lambda)$ and let
$H:=\{v\in\Lambda^0:s_v\in I\}$. Theorem 5.2 of \cite{RaSiYe}
guarantees that this set is saturated and hereditary. Let
$\{t_\lambda:\lambda\in\Lambda\setminus\Lambda H\}$ be the universal
generating Cuntz-Krieger family in $C^*(\Lambda\setminus\Lambda H)$.
Theorem~5.2 of~\cite{RaSiYe} also implies that the ideal $I_H$
generated by $\{s_v:v\in H\}$ is a gauge invariant ideal of
$C^*(\Lambda)$, and that there is an isomorphism
$\varphi:C^*(\Lambda\setminus\Lambda H)\to C^*(\Lambda)/ I_H$
satisfying $\varphi(t_\lambda)=s_\lambda+I_H$ for all
$\lambda\in\Lambda\setminus\Lambda H$. So it suffices to show that
$I=I_H$. Since $I_H\subset I$, the quotient map $q_I:C^*(\Lambda)\to
C^*(\Lambda)/I$ defined by
\[
 q_I(s_\lambda)=s_\lambda+I
\]
descends to a $C^*$-homomorphism
$\widetilde{q_I}:C^*(\Lambda)/I_H\to C^*(\Lambda)/I$ satisfying
\[
 \widetilde{q_I}(s_\lambda+I_{H_I})=s_\lambda+I
\]
for all $\lambda\in\Lambda$. Consider the composition
\[
 \widetilde{q_I}\circ \varphi:C^*(\Lambda\setminus\Lambda H)\to C^*(\Lambda)/I.
\]
We claim $\widetilde{q_I}\circ\varphi(t_v)\neq0$ for all
$v\in\Lambda^0\setminus H=(\Lambda\setminus\Lambda H)^0$.

To see this, fix $v\in(\Lambda\setminus\Lambda H)^0$. Then
%\begin{eqnarray*}
\[
 v\not\in H
 \Longrightarrow s_v\not\in I
 \Longrightarrow q_I(s_v)\neq 0
 \Longrightarrow \widetilde{q_I}(s_v+I_{H_I})\neq 0
 \Longrightarrow \widetilde{q_I}\circ\varphi(t_v)\neq 0.
\]
%\end{eqnarray*}
Hence $\ker(\widetilde{q_I}\circ\varphi)$ contains no vertex
projections and Proposition~\ref{prop:localperiodicityequiv}
implies that $\ker(\widetilde{q_I}\circ\varphi)=\{0\}$. So
$\widetilde{q_I}\circ\varphi$ is an isomorphism, and in
particular $\widetilde{q_I}:C^*(\Lambda)/I_{H_I}\to
C^*(\Lambda)/I$ is injective. This forces $I_H=I$ as required.

($(1)\Longrightarrow(2)$) The proof of this implication is
almost identical to the proof of (1)$\Longrightarrow$(2) in
\cite[Proposition 3.7]{RoSi}. We argue by contrapositive.
Suppose there are a saturated hereditary subset
$H\subset\Lambda$, a vertex $v\in\Lambda^0\setminus H$ and
elements $m\neq n\in\NN^k$ such that $\Lambda\setminus\Lambda
H$ has local periodicity $m,n$ at $v$. Let
$\{t_\lambda:\lambda\in\Lambda\setminus\Lambda H\}$ be the
universal generating Cuntz-Krieger  family for
$C^*(\Lambda\setminus\Lambda H)$. Theorem 5.2 of \cite{RaSiYe}
guarantees there exists an isomorphism
$\varphi:C^*(\Lambda\setminus\Lambda H)\to C^*(\Lambda)/I_H$
satisfying $\varphi(t_\lambda)=s_\lambda+I_H$. Let $q_{I_H}$
denote the quotient map from $C^*(\Lambda)$ to
$C^*(\Lambda)/I_H$.

As in the proof of Proposition \ref{prop:localperiodicityequiv}, we
construct $\mu, \nu, \alpha \in \Lambda \setminus \Lambda H$ such
that $a = t_{\mu\alpha}t_{\mu\alpha}^*-t_{\nu\alpha}t_{\nu\alpha}^*
\in C^*(\Lambda\setminus\Lambda H)\setminus\{0\}$ such that
$\pi_T(a)=0$ where $\pi_T$ is the boundary path representation of
$C^*(\Lambda \setminus \Lambda H)$. Let
$b:=s_{\mu\alpha}s_{\mu\alpha}^*-s_{\nu\alpha}s_{\nu\alpha}^*\in
C^*(\Lambda)$. Then $\varphi\circ q_{I_H}(b)=a\neq0$ but
$\pi_T\circ\varphi\circ q_{I_H}(b)=\pi_T(a)=0$. Since $\varphi$ is an
isomorphism, the kernel of $\varphi\circ q_{I_H}$ is $I_H$. Since the
kernel of $\pi_T$ contains no vertices of $\Lambda\setminus\Lambda
H$, the ideal $J=\ker(\pi_T\circ\varphi\circ q_{I_H})$ also satisfies
$J_H=H$. We have $J\neq I_H$ because $b\in J\setminus I_H$. Theorem
5.2 of \cite{RaSiYe} implies that $H\mapsto I_H$ is an isomorphism of
the saturated hereditary subsets of $\Lambda$ and gauge-invariant
ideals of $C^*(\Lambda)$, and the implication $(2)\Longrightarrow
(1)$ shows that this has inverse $I\mapsto I_H$. Since $J\neq I =
I_{H_J}$ it follows that $J$ is a non-trivial ideal of $C^*(\Lambda)$
that is not gauge invariant.
\end{proof}

\end{document}